\newtheorem{theo}{\bf Theorem}[section]
\newtheorem{lemma}{\bf Lemma}[section]
\newtheorem{rem}{\bf Remark}[section]
\newcommand{\D}{{\mathcal D}}
\newcommand{\F}{{\mathcal F}}
\newcommand{\spn}{{\rm span}}
\newcommand{\Hp}{{\mathcal H}}
\newcommand{\Z}{{\mathbb Z}}
\newcommand{\N}{{\Bbb N}}
\newcommand{\Q}{{\Bbb Q}}
\newcommand{\R}{{\mathbb R}}
\newcommand{\bea}{\begin{eqnarray*}}
\newcommand{\eea}{\end{eqnarray*}}
\newcommand{\be}{\begin{eqnarray}}
\newcommand{\ee}{\end{eqnarray}}
\newcommand{\vol}{\mbox{vol}\,}
\newcommand{\ve}{\boldsymbol}
\newcommand{\prob}{\mbox{\rm Prob}\,}
\newcommand{\diam}{\mbox{\rm diam}\,}
\newcommand{\interior}{\mbox{\rm int}\,}
\newcommand{\frob}{\mathrm{F}}
\newcommand{\dfrob}{\mathrm{g}}
\numberwithin{equation}{section}
\begin{document}

\title[Feasibility of Integer Knapsacks]{On Feasibility of Integer Knapsacks}
\author{Iskander Aliev}
\address{School of Mathematics and Wales Institute of Mathematical and Computational Sciences, Cardiff University, Senghennydd Road, CARDIFF, Wales, UK}
\email{alievi@cf.ac.uk}

\author{Martin Henk}
\address{Institut f\"ur Algebra und Geometrie, Otto-von-Guericke
Universit\"at Mag\-deburg, Universit\"atsplatz 2, D-39106-Magdeburg,
Germany} \email{henk@math.uni-magdeburg.de}


\begin{abstract}
 Given a matrix  $A\in \Z^{m\times n}$ satisfying certain
 regularity assumptions, we consider the set ${\mathcal F}(A)$ of all
 vectors  ${\ve b}\in \Z^m$ such that the associated {\em knapsack polytope}
\bea
P(A,{\ve b})=\{{\ve x}\in \R^n_{\ge 0}: A {\ve x}={\ve b}\}\,
\eea
contains an integer point. When $m=1$ the set ${\mathcal F}(A)$ is
known to contain all consecutive integers greater than the Frobenius
number associated with $A$. In this paper we introduce the {\em
  diagonal Frobenius number} $\dfrob(A)$ which reflects in an
analogous way feasibility properties of the problem and the structure of ${\mathcal F}(A)$ in the general case.
We give an optimal upper bound for $\dfrob(A)$ and also estimate the asymptotic growth of the diagonal Frobenius number on average.

\end{abstract}

\keywords{Knapsack problem; Frobenius numbers; successive minima; inhomogeneous minimum; distribution of lattices}

\subjclass[2000]{Primary: 90C10, 90C27, 11D07   ; Secondary:  11H06}

\maketitle

\section{Introduction and statement of results}

%
Let $A\in\Z^{m\times n}$, $1\leq m<n$, be an integral
$m\times n$ matrix satisfying
\begin{equation}
\begin{split}
{\rm i)}&\,\, \gcd\left(\det(A_{I_m}) : A_{I_m}\text{ is an $m\times
    m$ minor of }A\right)=1, \\
{\rm ii)}&\,\, \{{\ve x}\in\R^n_{\ge 0}: A\,{\ve x}={\ve 0}\}=\{{\ve 0}\}.
\end{split}
\label{assumption}
\end{equation}
For such a matrix $A$ and a vector  ${\ve b}\in \Z^m$  the so called  {\em knapsack
  polytope} $P(A,{\ve b})$ is defined as
\bea
P(A,{\ve b})=\{{\ve x}\in \R^n_{\ge 0}: A {\ve x}={\ve b}\}\,.
\label{P}
\eea
Observe that on account of \eqref{assumption} ii), $P(A,{\ve b})$ is
indeed a polytope (or empty).

The paper is concerned with the following
integer programming feasibility problem:
\be
\mbox{Does the polytope}\; P(A,{\ve b})\; \mbox{contain an integer vector?}
\label{Knapsack}
\ee
The problem is often called the  {\em integer knapsack problem} and is well-known to be NP-complete (Karp \cite{Karp}).
Let ${\mathcal F}(A)$ be the set of integer vectors ${\ve b}$ such
that the instance of (\ref{Knapsack}) is feasible, i.e.,
\begin{equation*}
{\mathcal F}(A)=\{{\ve b}\in\Z^m : P(A,{\ve b})\cap\Z^n\ne\emptyset\}.
\end{equation*}
A description of the set ${\mathcal F}(A)$ in terms of polynomials that can be regarded as a discrete analog of the celebrated {\em Farkas Lemma} is obtained in Lasserre \cite{Lasserre1}. The test Gomory and Chv\'atal functions for ${\mathcal F}(A)$ are also given in Blair and Jeroslow
\cite{BJ} (see also Schrijver \cite[Corollary 23.4b]{ASch}).
In this paper we investigate the geometric structure of the set ${\mathcal F}(A)$ which, apart from a few special cases, remains unexplored.  Results of Knight \cite{Knight},
Simpson and Tijdeman \cite{ST} and Pleasants, Ray and Simpson \cite{PRS} suggest that the set
${\mathcal F}(A)$ may be decomposed into the set of all integer points in the interior of a certain translated {\em feasible} cone and a complementary set
with complex combinatorial structure. We give an optimal, up to a
constant multiplier, estimate for the position of such a  feasible cone and
also prove that a much stronger asymptotic estimate holds on average.

Before formally stating our main results, we will briefly address the
special case $m=1$ which is also our guiding case.
In this case the matrix $A$ is just an input vector ${\ve a}=(a_1,
a_2, \ldots, a_n)^T\in\Z^n$ and  \eqref{assumption} i) says that
 $\gcd({\ve a}):=\gcd(a_1, a_2,
\ldots, a_n)=1$. Due to the second assumption \eqref{assumption} ii)
we may assume that all entries of ${\ve a}$ are positive, and
  the largest integral value $b$ such that the instance of (\ref{Knapsack}) with $A={\ve a}^T$ and ${\ve b}=(b)$ is
infeasible is called the {\em Frobenius number} of ${\ve a}$\,,
denoted by $\frob({\ve a})$. Thus
%
\begin{equation}
\interior\{\frob({\ve a})+\R_{\ge 0}\}\cap\Z\subset {\mathcal F}({\ve
  a}),
\label{eq:frob_cone}
\end{equation}
where $\interior\{\cdot\}$ denotes the interior of the set.

Frobenius numbers naturally appear in the analysis of integer
programming algorithms (see, e.g.,  Aardal and Lenstra
\cite{Aardal_Lenstra}, Hansen and Ryan \cite{Hansen_Ryan}, and Lee,
Onn and Weismantel \cite{Lee_Onn_Weismantel}). The general problem of
finding $\frob({\ve a})$ has been traditionally referred to as the
{\em Frobenius problem}.  This problem is NP-hard (Ram\'{\i}rez
Alfons\'{\i}n \cite{Alf1, Alf}) and integer programming techniques are known to be an effective tool for computing Frobenius numbers (see Beihoffer et al \cite{BHNW}).

Since computing $\frob({\ve a})$ is NP-hard,
good upper bounds for the Frobenius number itself and for its average value are of particular interest.
In terms of the Euclidean norm $||\cdot||$ of the input vector ${\ve a}$, all known upper bounds for $\frob({\ve a})$ can be represented in the form
\be
\frob({\ve a}) \ll_n ||{\ve a}||^2\,,
\label{general_upper_bound}
\ee
where $\ll_{n}$ denotes the Vinogradov symbol with the constant depending on $n$ only.
It is also known that the exponent $2$ on right hand side of
(\ref{general_upper_bound}) cannot be lowered (see, e. g., Arnold \cite{Arnold06}, Erd\H{o}s and Graham \cite{EG} and Schlage-Puchta \cite{SP}).

The limiting distribution of $\frob({\ve a})$  in the 3-dimensional
case was derived in Shur, Sinai, and Ustinov \cite{Shur}, and  for the general case, see Marklof \cite{Marklof}. Upper bounds for the average value of $\frob({\ve a})$ have been obtained in Aliev and Henk \cite{AlievHenk} and Aliev, Henk and Hinrichs \cite{AHH}. In terms of $||{\ve a}||$ the bounds have the form
\be
\sim ||{\ve a}||^{1+1/(n-1)}\,,
\label{general_average_upper_bound}
\ee
where the exponent $1+1/(n-1)$ cannot be lowered \cite{AHH}.

The main goal of the present paper is to obtain results of the types (\ref{general_upper_bound}) and (\ref{general_average_upper_bound})
for the general integer knapsack problem. Our interest was also motivated by the papers of Aardal, Hurkens and Lenstra \cite{AHL}
and Aardal, Weismantel and  Wolsey \cite{AWW}
on algorithmic aspects of the  problem.

First we will need a generalization of the Frobenius number which will reflect feasibility
properties of problem (\ref{P}).
%
 Let ${\ve v}_1,\ldots,{\ve v}_n\in \Z^m$  be the columns  of the matrix $A$ and let
\bea
C=\{\lambda_1{\ve v}_1+\cdots+\lambda_n{\ve v}_n: \lambda_1,\ldots,\lambda_n\ge 0\}\,
\eea
be the cone generated by ${\ve v}_1,\ldots,{\ve v}_n$. Let also ${\ve v}:={\ve v}_1+\ldots+{\ve v}_n$. By the {\em diagonal Frobenius number} $\dfrob(A)$ {\em of} $A$ we understand the minimal $t\ge 0$, such that for all ${\ve b}\in \{t{\ve v}+C\}\cap\Z^m$ the problem (\ref{Knapsack}) is feasible.
Then, in particular, (cf.\eqref{eq:frob_cone})
\begin{equation}
\{\dfrob(A){\ve v}+C\}\cap\Z^m \subset {\mathcal F}(A)\,.
\label{eq:diag}
\end{equation}
In Section \ref{FL} we show that
the diagonal Frobenius number is well-defined.
In particular, we see that $\dfrob(A)=0$ if and only if the column
vectors ${\ve v}_1,\ldots,{\ve v}_n$
form a so called Hilbert basis for the cone $C$ (cf.~\cite[Sec. 16.4]{ASch}). From this viewpoint, roughly speaking, the smaller $\dfrob(A)$
the closer the collection of vectors ${\ve v}_1,\ldots,{\ve v}_n$ to being a Hilbert basis of $C$.

The diagonal Frobenius number $\dfrob(A)$ appears in work of Khovanskii
(\cite[Proposition 3]{Khovanskii1}), and
%
the vector $\dfrob(A){\ve v}$ is also a special choice of a so called {\em
  pseudo--conductor} as introduced in Vizv\'ari \cite{Vizvari1}
(cf.~\cite[Sec. 6.5]{Alf}). Moreover, $\dfrob(A)$ can be easily  used in
order to get an inclusion as in  \eqref{eq:diag} for an arbitrary  ${\ve w}\in
\interior C\cap\Z^m$ instead of ${\ve v}$.

\begin{lemma} Let ${\ve w}\in\interior C\cap\Z^m$. Then
\begin{equation*}
\{t\,{\ve w}+C\}\cap\Z^m \subset {\mathcal F}(A)
\end{equation*}
for all $t\geq \sqrt{\frac{{\det(AA^T)}}{n-m+1}}\,\dfrob(A)$.
\label{general}
\end{lemma}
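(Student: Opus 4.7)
The plan is to reduce to the case ${\ve w}={\ve v}$ that is already handled by \eqref{eq:diag}: since $\{\dfrob(A){\ve v}+C\}\cap\Z^m\subset{\mathcal F}(A)$, it is enough to show that for $t$ above the stated threshold one has the set inclusion $t{\ve w}+C\subseteq\dfrob(A){\ve v}+C$, as intersecting with $\Z^m$ then yields the lemma. Because both cones are translates of $C$, this inclusion is equivalent to the single membership
\[
t{\ve w}-\dfrob(A){\ve v}\in C.
\]

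I would verify this condition by dualising. Each facet of $C$ is spanned by an $(m-1)$-subset $F\subseteq\{1,\ldots,n\}$ of the generators, and carries an outward primitive integer normal ${\ve y}_F\in\Z^m$ whose entries are, up to sign and a common integer factor $g_F$, the $(m-1)\times(m-1)$ minors of $A$ built from the columns indexed by $F$. Membership in $C$ is then the system $t\,\langle{\ve y}_F,{\ve w}\rangle\geq\dfrob(A)\,\langle{\ve y}_F,{\ve v}\rangle$ over all facets $F$. Since ${\ve y}_F$ is primitive integer and ${\ve w}\in\interior C\cap\Z^m$, the number $\langle{\ve y}_F,{\ve w}\rangle$ is a positive integer and hence $\geq 1$, so it suffices to prove the uniform estimate
\[
\langle{\ve y}_F,{\ve v}\rangle\leq\sqrt{\det(AA^T)/(n-m+1)}\quad\text{for every facet }F.
\]

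This last bound is the crux. By construction $\langle{\ve y}_F,{\ve v}_j\rangle=\pm\det(A_{F\cup\{j\}})/g_F$ for $j\notin F$ and vanishes for $j\in F$, so $\langle{\ve y}_F,{\ve v}\rangle$ equals $g_F^{-1}$ times a signed sum of the $n-m+1$ full $m\times m$ minors $\det(A_{F\cup\{j\}})$, $j\notin F$. The Cauchy--Binet identity $\det(AA^T)=\sum_{|I|=m}\det(A_I)^2$ controls the sum of squares of these minors, and a Cauchy--Schwarz estimate applied to the $n-m+1$ summands produces an upper bound of exactly the right shape. The principal obstacle I foresee is arriving at the sharp constant $\sqrt{\det(AA^T)/(n-m+1)}$ rather than the weaker $\sqrt{(n-m+1)\det(AA^T)}$ that a direct Cauchy--Schwarz alone yields; closing this factor of $n-m+1$ will presumably require exploiting the common factor $g_F$ of the minors comprising ${\ve y}_F$, or invoking a sharper geometry-of-numbers inequality in the spirit of the successive-minima/inhomogeneous-minimum circle of ideas flagged in the paper's keywords.
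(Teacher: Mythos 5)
Your opening reduction---it suffices to check the single membership $t\,{\ve w}-\dfrob(A){\ve v}\in C$, i.e.\ (after scaling) that $\gamma\,{\ve w}\in{\ve v}+C$ with $\gamma=\sqrt{\det(AA^T)/(n-m+1)}$---is exactly the paper's first step. But the dual, facet-by-facet way you propose to verify it breaks down at precisely the point you flag, and the breakdown is not a missing refinement: the per-facet estimate you need, $\langle{\ve y}_F,{\ve v}\rangle\le\sqrt{\det(AA^T)/(n-m+1)}$, is false. Already for $m=1$ the cone $C=\R_{\ge 0}$ has the single primitive normal $y_F=1$, so $\langle y_F,{\ve v}\rangle=a_1+\cdots+a_n=\|{\ve a}\|_1\ge\|{\ve a}\|>\|{\ve a}\|/\sqrt{n}=\sqrt{\det(AA^T)/n}$. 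In general $\langle{\ve y}_F,{\ve v}\rangle$ is $g_F^{-1}$ times a sum of $n-m+1$ nonnegative $m\times m$ minors, and nothing forces $g_F>1$, so the factor $n-m+1$ cannot be recovered from $g_F$ or from any sharper inequality: what your argument genuinely yields (Cauchy--Schwarz over the $n-m+1$ minors plus Cauchy--Binet) is the lemma with the weaker threshold $t\ge\sqrt{(n-m+1)\det(AA^T)}\,\dfrob(A)$, not the stated one. That is a real gap, since a facet-at-a-time test combined only with $\langle{\ve y}_F,{\ve w}\rangle\ge 1$ provably cannot do better.

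The paper argues primally instead: it produces a point ${\ve c}\in P(A,{\ve w})$, the barycenter of the vertex set $V$, whose coordinates are claimed to be at least $1/\gamma$, using Cramer's rule ($y_i\ge 1/\det A_{I_{\ve y}}$ for nonzero vertex coordinates), an AM--HM averaging over the vertices, Cauchy--Schwarz, Cauchy--Binet, and $\#V\ge n-m+1$; then $\gamma{\ve w}=A(\gamma{\ve c})$ with all coefficients at least $1$ gives $\gamma{\ve w}\in{\ve v}+C$. So the intended gain of the factor $\sqrt{n-m+1}$ comes from averaging a primal certificate over \emph{all} vertices of $P(A,{\ve w})$, something your dual route has no analogue of. Be aware, though, that the obstruction you ran into is a genuine warning sign about the constant itself: the paper's averaging step uses $y_i\ge 1/\det A_{I_{\ve y}}$ for every vertex, including those with $y_i=0$, and for instance for $m=1$, ${\ve a}=(2,5)$, ${\ve w}=1$ the barycenter is $(1/4,1/10)$ while $1/\gamma\approx 0.26$, so the claimed coordinate bound fails there. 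In short: as written your proposal does not prove the stated lemma; carried out carefully it proves the correct but weaker statement with threshold $\sqrt{(n-m+1)\det(AA^T)}\,\dfrob(A)$, and reaching the stated constant would require the paper's (repaired) primal barycenter argument rather than your dual one.
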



To the best of our knowledge this generalized Frobenius problem had
been  investigated in the literature only in the
case $n=m+1$ (see, e. g., Knight \cite{Knight},
Simpson and Tijdeman \cite{ST} and Pleasants, Ray and Simpson \cite{PRS}).
However, even in this special case the results of the types (\ref{general_upper_bound}) and (\ref{general_average_upper_bound}) were not known.

Here we prove with respect to the diagonal Frobenius number
\begin{theo}
The inequality
\be
\dfrob(A)\le\,c_{m,n}\sqrt{\det(AA^T)}
\label{upper_bound_for_dFN}
\ee
holds. For $c_{m,n}$ one can take
\bea
c_{m,n}=\frac{(n-m)2^{n-m-1}}{\omega_{n-m}}\,,
\eea
where $\omega_k$ denotes the volume of the $k$-dimensional unit ball.
\label{upper_bound}
\end{theo}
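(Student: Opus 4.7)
The plan is to bound $\dfrob(A)$ by the covering radius of an explicit convex body with respect to the kernel lattice $\Lambda=\ker(A)\cap\Z^n$, and then to estimate that covering radius via the classical geometry-of-numbers chain (Jarn\'ik's inequality and Minkowski's second theorem), using essentially the fact that $\Lambda\subseteq\Z^n$. Set $d=n-m$ and view $\Lambda$ as a rank-$d$ lattice in the $d$-dimensional Euclidean subspace $\ker(A)\subset\R^n$. A routine Cauchy--Binet calculation on a basis matrix of $\Lambda$ yields $\det\Lambda=\sqrt{\det(AA^T)}$, so the theorem becomes $\mu(K,\Lambda)\le\frac{d\cdot 2^{d-1}}{\omega_d}\det\Lambda$ for a suitable body $K$.

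Define $K=\{\ve z\in\ker(A):\,\ve z\ge-\ve 1\}$; by assumption~\eqref{assumption}~ii) it is bounded (any recession direction would lie in $\ker(A)\cap\R^n_{\ge 0}=\{\ve 0\}$) and hence a convex body of full dimension $d$ in $\ker(A)$. The first step is to prove that $\dfrob(A)\le\mu(K,\Lambda)$, where $\mu(K,\Lambda)=\min\{t\ge 0:\,tK+\Lambda=\ker(A)\}$. Let $t\ge\mu(K,\Lambda)$ and $\ve b\in(t\ve v+C)\cap\Z^m$: pick an integer $\ve y$ with $A\ve y=\ve b$ (exists by~\eqref{assumption}~i)) and a real $\ve x^*\ge t\ve 1$ with $A\ve x^*=\ve b$ (exists since $\ve b\in t\ve v+C$). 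Then $\ve w:=\ve y-\ve x^*\in\ker(A)$, and the covering-radius assumption supplies $\ve\lambda\in\Lambda$ with $\ve w-\ve\lambda\in tK$, i.e.\ $\ve w-\ve\lambda\ge-t\ve 1$; hence $\ve y-\ve\lambda=\ve x^*+(\ve w-\ve\lambda)\ge\ve 0$ is the desired nonnegative integer preimage of $\ve b$.

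The second step is the inclusion $B\subseteq K$, where $B=\{\ve z\in\ker(A):\|\ve z\|\le 1\}$ is the Euclidean unit ball inside $\ker(A)$: if $\|\ve z\|\le 1$ then $z_i=\langle\ve z,\ve e_i\rangle\ge-1$ by Cauchy--Schwarz. Thus $\mu(K,\Lambda)\le\mu(B,\Lambda)$, and the task reduces to bounding the covering radius of a Euclidean ball. For this I would use Jarn\'ik's inequality $\mu(B,\Lambda)\le\frac{d}{2}\,\lambda_d(B,\Lambda)$ for the symmetric body $B$ together with Minkowski's second theorem $\lambda_1\cdots\lambda_d\,\omega_d\le 2^d\det\Lambda$. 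Since $\Lambda\subseteq\Z^n$, every nonzero $\ve u\in\Lambda$ satisfies $\|\ve u\|\ge 1$, so $\lambda_1\ge 1$ and therefore $\lambda_1\cdots\lambda_{d-1}\ge 1$; dividing the Minkowski bound by this quantity gives $\lambda_d\le 2^d\det\Lambda/\omega_d$. Concatenating these estimates yields
\[
\dfrob(A)\le\mu(K,\Lambda)\le\mu(B,\Lambda)\le\frac{d}{2}\lambda_d\le\frac{d\cdot 2^{d-1}}{\omega_d}\det\Lambda=c_{m,n}\sqrt{\det(AA^T)}.
\]

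The step that I expect to need real care is the first one, namely checking that the feasibility condition for every integer $\ve b\in(t\ve v+C)\cap\Z^m$ is implied by a single covering statement for the body $K$ inside $\ker(A)$; everything afterwards is a clean application of the standard geometry-of-numbers toolbox. The shape of the constant $c_{m,n}$ comes precisely from combining Jarn\'ik, Minkowski's second theorem, and the crucial integrality observation $\lambda_1(B,\Lambda)\ge 1$, which lets the product Minkowski bound be read as an upper bound on $\lambda_d$ alone.
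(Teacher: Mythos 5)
Your proposal is correct and follows essentially the same route as the paper: your body $K=\{\ve z\in\ker(A):\ve z\ge-\ve 1\}$ is exactly $P(A,\ve v)-\ve 1$, your first step is precisely Lemma \ref{lem:inhom_min} (proved by the same covering argument), and the rest — the inclusion of the unit ball, Jarn\'ik's inequality, Minkowski's second theorem, and the integrality bound $\lambda_i\ge 1$ — coincides with the paper's proof, yielding the same constant $c_{m,n}$.
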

In the special case $m=1$, Theorem \ref{upper_bound} together with
Lemma \ref{general} gives the best possible upper bound
\eqref{general_upper_bound} on the Frobenius number $\frob({\ve a})$.

The next result shows optimality of the upper bound
(\ref{upper_bound_for_dFN}) up to a constant factor in general.
\begin{theo} Let $1\leq m<n$.
There exists an infinite sequence
of matrices $A_t\in\Z^{m\times n}$ and a constant
$c'_{m,n}>0$ such that
\bea
\dfrob(A_t)>\,c'_{m,n}\sqrt{\det(A_tA_t^T)}.
\eea
%
\label{optimality}
\end{theo}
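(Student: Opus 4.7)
The plan is to transfer the classical tightness of the exponent $2$ in \eqref{general_upper_bound} from $m=1$ to arbitrary $m$ by embedding a hard one-dimensional Frobenius instance as the first row of $A_t$ and padding with an $(m-1)\times(m-1)$ identity block, so that the only obstruction to feasibility still lives in the first coordinate. Concretely, I would set $k=n-m+1\ge 2$ and, for a large integer parameter $t$, take $\ve a_t=(t,t+1,\dots,t+k-1)\in\Z^k$, which has $\gcd(\ve a_t)=1$ and, by Roberts's formula (or any of the tight constructions in \cite{EG,Arnold06,SP,AHH}), satisfies $\frob(\ve a_t)\ge c_k t^2$ for some $c_k>0$ depending only on $k$. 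Then I would form the block matrix
\begin{equation*}
A_t=\begin{pmatrix}\ve a_t^T & \ve 0\\ \ve 0 & I_{m-1}\end{pmatrix}\in\Z^{m\times n}.
\end{equation*}

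Next I would verify the hypotheses \eqref{assumption} and compute $\sqrt{\det(A_tA_t^T)}$. All columns of $A_t$ are non-zero vectors in $\R^m_{\ge 0}$, which gives \eqref{assumption} ii). The only non-vanishing $m\times m$ minors take one column from the first block together with all $m-1$ columns of the identity block, so they equal $\pm(a_t)_i$ for some $i\in\{1,\dots,k\}$; their gcd is therefore $\gcd(\ve a_t)=1$, giving \eqref{assumption} i). Because $A_tA_t^T$ is block diagonal with blocks $\|\ve a_t\|^2$ and $I_{m-1}$, we obtain $\sqrt{\det(A_tA_t^T)}=\|\ve a_t\|$, which is of order $\sqrt{k}\,t$.

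To exhibit an infeasible integer point deep inside the cone, note that here $C=\R^m_{\ge 0}$ and $\ve v=(\|\ve a_t\|_1,1,\dots,1)^T$. I would take $\ve b_t=(\frob(\ve a_t),M,\dots,M)^T$ where $M$ is any integer at least $\frob(\ve a_t)$. The system $A_t\ve x=\ve b_t,\,\ve x\in\Z^n_{\ge 0}$, decouples; its first row forces $\ve a_t^T(x_1,\dots,x_k)^T=\frob(\ve a_t)$, which has no non-negative integer solution by the very definition of the Frobenius number, so $\ve b_t\notin\mathcal F(A_t)$. On the other hand, for $t_0=\frob(\ve a_t)/\|\ve a_t\|_1$ we have $b_{t,1}=t_0\|\ve a_t\|_1$ and $M\ge t_0$, whence $\ve b_t\in t_0\ve v+C$. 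Since $t\mapsto t\ve v+C$ is a nested family (a larger $t$ gives a smaller set, because $(t-t')\ve v\in C$ for $t\ge t'$), the presence of an infeasible integer point in $t_0\ve v+C$ forces
\begin{equation*}
\dfrob(A_t)>t_0=\frac{\frob(\ve a_t)}{\|\ve a_t\|_1}\ge c'_k\,t\ge c'_{m,n}\sqrt{\det(A_tA_t^T)},
\end{equation*}
which is the bound asserted in the theorem.

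The main obstacle is really just invoking the classical quadratic lower bound $\frob(\ve a_t)\ge c_k t^2$, available from the one-dimensional theory referenced above; all the rest is a bookkeeping reduction via the block structure. The only technical points requiring care are the verification that padding by $I_{m-1}$ does not spoil the gcd condition \eqref{assumption} i) and that the infeasibility in the first row cannot be cancelled by choices in the identity block, both of which are immediate from the block-diagonal form.
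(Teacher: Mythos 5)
Your proposal is correct, but it takes a genuinely different route from the paper. You reduce to the classical $m=1$ case: pad a hard one-dimensional instance $\ve a_t=(t,t+1,\dots,t+n-m)$ with an identity block, observe that the cone becomes $\R^m_{\ge 0}$ with $\ve v=(\|\ve a_t\|_1,1,\dots,1)^T$ and $\sqrt{\det(A_tA_t^T)}=\|\ve a_t\|$, and then exhibit the infeasible point $(\frob(\ve a_t),M,\dots,M)^T$ deep in the shifted cone, so that $\dfrob(A_t)\ge \frob(\ve a_t)/\|\ve a_t\|_1\gg_{m,n}t\gg_{m,n}\|\ve a_t\|$; the quadratic growth of $\frob$ for consecutive integers (Brauer/Roberts) is classical and elementary, the gcd and block-decoupling checks are exactly as you say, and the nestedness of $s\mapsto s\ve v+C$ legitimately converts the single infeasible point into the lower bound on $\dfrob$ (strictness is absorbed into the constant). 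The paper instead argues geometrically and without any reduction to $m=1$: it fixes an $(n-m)$-dimensional subspace $S$, approximates by rational $m$-dimensional subspaces $P_t$ whose orthogonal lattices $L_t$ have the first $n-m-1$ successive minima frozen, uses Minkowski's second theorem to force $\lambda_{n-m}(t)\gg\det(L_t)$, and then finds a translate of the fiber polytope near a vertex of the cone whose diameter is smaller than $\lambda_{n-m}(t)$, hence lattice-point free, giving $\dfrob(A_t)\ge\xi\det(L_t)-1$. What the paper's heavier construction buys is genericity: the bad matrices can be produced with prescribed limiting subspace/direction, which is exactly what powers the refinement in the appendix (Theorem 6.1) that the quadratic order cannot be improved along any given direction $\ve\alpha$. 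Your block-diagonal examples are much simpler and fully explicit but are very special (the cone is the orthant and the obstruction lives in one coordinate), so they prove Theorem 1.2 as stated but would not yield that directional refinement.
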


In fact we show that the sequence $A_t$ can be chosen in a somewhat generic way.  In the special case $m=1$ Theorem \ref{only_asymptotic} shows that, roughly speaking, cutting off special families of input vectors cannot make the order of upper bounds for the Frobenius number $\frob$ smaller than $||{\ve a}||^2$. We discuss this result in detail in Appendix \ref{A1}.

The next natural question is to derive upper bounds for the diagonal
Frobenius number of a ``typical'' integer knapsack problem.
Our approach to this problem is based on Geometry of Numbers for which
we refer to the books \cite{Cassels, peterbible, GrLek}.

By a {\em lattice} we will understand a discrete submodule $L$ of a
finite-dimen\-sional Euclidean space. Here we are mainly interested in
primitive lattices $L\subset \Z^n$, where such a lattice is called
{\em primitive} if $L=\spn_{\R}(L)\cap \Z^n$.

Recall that the Frobenius number $\frob({\ve a})$ is defined only for
integer vectors ${\ve a}=(a_1, a_2, \ldots,a_n)$ with $\gcd({\ve
  a})=1$. This is equivalent to the statement that the
$1$-dimensional lattice $L=\Z\,{\ve a}$, generated by ${\ve a}$ is
primitive. This generalizes easily to  an $m$-dimensional lattice
$L\subset\Z^n$ generated by $a_1,\cdots, a_m\in\Z^n$. Here the criterion
is that $L$ is primitive if and only if the greatest common divisor of
all $m\times m$-minors is 1. This is an immediate consequence of
Cassels \cite[Lemma 2, Chapter1]{Cassels} or see Schrijver
\cite[Corollary 4.1c]{ASch}.

Hence, by our assumption \eqref{assumption} i), the rows of the matrix
$A$ generated a primitive lattice $L_A$. The determinant of an
$m$-dimensional lattice is the $m$-dimensional volume of the
parallelepiped spanned by the vectors of a basis. Thus in our setting
we have
\begin{equation*}
 \det L_A = \sqrt{\det A\,A^T}.
\end{equation*}
In Section 2 we will see that
$\dfrob(A)$ depends only on the lattice $L_A$ and not on the particular basis
given by the rows of $A$. Hence we may also write $\dfrob(L_A)$ instead of $\dfrob(A)$.
%
Now for $T\in\R_{>0}$ and $1\leq m\leq n-1$ let
\bea
\begin{split}
G(m,n, T)= \{L\subset\Z^n : &\,\,L \;\mbox{is an $m$-dimensional primitive
  lattice with} \\ &\det(L)\le T\},
\end{split}
\eea
and let $\prob_{m,n,T}(\cdot)$ be the uniform probability distribution on $G(m,n, T)$.

\begin{theo} Let $1\leq m\leq n-1$. Then
%
\bea
\prob_{m,n,T}\left(\frac{\dfrob(L)}{(\det(L))^{1/(n-m)}}>t\right)\ll_{m,n} t^{-2}.
\eea
%
%
\label{distr}
\end{theo}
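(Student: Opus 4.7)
The plan is to bound the second moment of the normalised diagonal Frobenius number $X(L) := \dfrob(L)/\det(L)^{1/(n-m)}$ uniformly in $T$, and then apply Markov's inequality in the form $\prob_{m,n,T}(X>t)\le \mathbb{E}[X^2]/t^2$. The natural passage is through the orthogonal integer complement
\begin{equation*}
  M = M(L) := \{\ve y \in \Z^n : \ve y \cdot \ve x = 0 \text{ for all } \ve x \in L\},
\end{equation*}
a primitive $(n-m)$-dimensional sublattice of $\Z^n$ with $\det M = \det L$. Orthogonal complementation is a determinant-preserving involution $G(m,n,T)\leftrightarrow G(n-m,n,T)$, so the averaging can equivalently be carried out on the $(n-m)$-dimensional side, where the Euclidean successive minima $\lambda_i(M)$ naturally live.

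The first step is to extract, by inspecting the geometric argument behind Theorem \ref{upper_bound}, the sharper pointwise bound
\begin{equation*}
  \dfrob(L) \ll_{m,n} \lambda_{n-m}(M).
\end{equation*}
Theorem \ref{upper_bound} itself would then follow from the crude estimate $\lambda_{n-m}(M)\ll_{m,n}\det(M)$, which is all one can say for primitive integer lattices in the worst case, but this estimate is wasteful whenever the $\lambda_i(M)$ are balanced, and it is precisely this slack that will drive the averaging. Combining the sharper bound with Minkowski's second theorem $\lambda_1(M)\cdots\lambda_{n-m}(M)\asymp_{m,n}\det(M)$ yields
\begin{equation*}
  X(L)^2 \;\ll_{m,n}\; \frac{\det(L)^{2-2/(n-m)}}{\prod_{i=1}^{n-m-1}\lambda_i(M)^2} \;\le\; \frac{\det(L)^{2-2/(n-m)}}{\lambda_1(M)^{2(n-m-1)}}.
\end{equation*}

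It then remains to show
\begin{equation*}
  S(T) := \sum_{M\in G(n-m,n,T)} \frac{\det(M)^{2-2/(n-m)}}{\lambda_1(M)^{2(n-m-1)}} \;\ll_{m,n}\; T^n \asymp_{m,n} \# G(m,n,T),
\end{equation*}
which gives $\mathbb{E}[X^2]\ll_{m,n}1$ and hence the claim. I would handle $S(T)$ by stratifying the sum by a shortest primitive vector $\ve u\in M$: sublattices $M\ni\ve u$ of dimension $n-m$ are in bijection with primitive $(n-m-1)$-dimensional sublattices of $\Z^n/\Z\ve u$ of determinant $\det(M)/|\ve u|$. Summing over primitive $\ve u$ and invoking the known asymptotic $\# G(k,n,T)\asymp_{k,n} T^n$ (Schmidt's counting theorem, in the spirit of the mean-value calculations of \cite{AlievHenk,AHH}) should close the estimate; the dimensional heuristic is that a typical $M$ has $\lambda_1(M)\asymp \det(M)^{1/(n-m)}$, which makes the summand $O(1)$ on average, so the count of $T^n$ lattices produces a sum of the same order.

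The main obstacle is the first step: refining the proof of Theorem \ref{upper_bound} to produce $\dfrob(L)\ll_{m,n}\lambda_{n-m}(M)$, since the theorem as stated yields only the weaker $\dfrob(L)\ll_{m,n}\det(L)$. Of secondary but nontrivial difficulty is the counting step, where one must track primitivity carefully when enumerating sublattices of $\Z^n$ through their shortest primitive vector, and verify that the exponent $2(n-m-1)$ in the denominator of $S(T)$ precisely absorbs the polynomial growth in the number of admissible short vectors, without leaving a residual divergent factor.
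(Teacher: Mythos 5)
Your ``main obstacle'' is in fact not an obstacle: the pointwise bound $\dfrob(L)\ll_{m,n}\lambda_{n-m}(B^n\cap\spn_\R(L^\perp),L^\perp)$ is already explicit in the paper --- it is inequality \eqref{eq:h1} in the proof of Theorem \ref{upper_bound}, recalled as \eqref{bound_for_d} at the start of Section 5 --- so nothing needs to be re-extracted. The genuine gap is in your averaging step: the claimed estimate $S(T)\ll_{m,n}T^n$ is false whenever $n-m\ge 3$, and fails by a logarithm when $n-m=2$. Indeed, write $k=n-m$ and consider only those primitive $k$-dimensional $M\subset\Z^n$ containing $\ve e_1$; these correspond bijectively (and determinant-preservingly) to primitive $(k-1)$-dimensional sublattices of $\Z^{n-1}$, of which $\gg T^{n-1}$ have $\det(M)\asymp T$, and each has $\lambda_1(M)=1$, so each contributes $\asymp T^{2-2/k}$ to $S(T)$. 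Hence $S(T)\gg T^{\,n+1-2/k}$, which exceeds $T^n\asymp\#G(k,n,T)$ as soon as $k\ge 3$; a dyadic stratification by $\lambda_1(M)$ shows $S(T)$ is of order $T^n\log T$ for $k=2$. The step responsible is the replacement of $\prod_{i=1}^{k-1}\lambda_i(M)^2$ by $\lambda_1(M)^{2(k-1)}$, which throws away exactly the information about the intermediate minima that the paper's argument retains.

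More structurally, a second-moment/Markov argument is too lossy here because $t^{-2}$ is precisely the critical tail decay at which the second moment ceases to be uniformly bounded: even using the sharp bound $X\ll\lambda_{k}(\Gamma)$ with $\Gamma=(\det L)^{-1/k}L^\perp$, for $k=2$ the proportion of similarity classes with $\lambda_2/\lambda_1>u$ is of order $u^{-1}$ under Schmidt's measure, so the truncated second moment grows like $\log T$ and cannot yield $\sup_T E[X_T^2]\ll 1$ by this route (for $k\ge 3$ one would need tail bounds for $\lambda_k(\Gamma)$ strictly sharper than what Theorem \ref{Schmidt_th_5} gives through single-ratio sets, i.e.\ sharper than the $t^{-2}$ one is trying to prove). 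The paper avoids moments altogether and bounds the tail probability directly: Lemma \ref{lambda} (which uses Minkowski's second theorem, hence all the minima, not just $\lambda_1$) shows that $\lambda_{k}(\Gamma)>\lambda$ forces some consecutive ratio $\lambda_{i+1}/\lambda_i$ to exceed $c_2\lambda^{2/(k-1)}$; Theorem \ref{Schmidt_th_5} bounds the measure of the union of the corresponding similarity classes by $\ll u^{-(k-1)}$ as in \eqref{measure_of_union}; and Theorem \ref{Schmidt_th_2} converts this measure into a proportion of lattices in $G(m,n,T)$ (through the same $L\leftrightarrow L^\perp$ duality you invoke), giving $\ll_{m,n}t^{-2}$ at once. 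Note that even the first-moment statement, Theorem \ref{Asymptotic_bound_1}, is deduced in the paper from this tail bound rather than the other way around.
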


The next theorem gives an upper bound for the average value of the diagonal Frobenius number.
\begin{theo} Let $1\leq m\leq n-1$. Then
%
\bea
\sup_{T}\frac{\sum_{L\in G(m,n,T)}\frac{\dfrob(L)}{(\det(L))^{1/(n-m)}}}{\# G(m,n,T)}\ll_{m,n} 1.
\eea 
%
\label{Asymptotic_bound_1}
\end{theo}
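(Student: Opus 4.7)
The plan is to derive Theorem \ref{Asymptotic_bound_1} directly from the tail estimate in Theorem \ref{distr} via the standard layer-cake representation of the expectation. Since $\prob_{m,n,T}$ is the uniform distribution on the finite set $G(m,n,T)$, the average appearing in the statement is exactly the expectation
\begin{equation*}
E_{m,n,T}[X] := \sum_{L\in G(m,n,T)} X(L)\,\prob_{m,n,T}(\{L\}), \qquad X(L):=\frac{\dfrob(L)}{(\det L)^{1/(n-m)}}.
\end{equation*}
Because $X$ is a nonnegative random variable, the layer-cake formula gives
\begin{equation*}
E_{m,n,T}[X]=\int_0^\infty \prob_{m,n,T}(X>t)\,dt.
\end{equation*}

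Next I would combine two elementary bounds on the integrand: the trivial bound $\prob_{m,n,T}(X>t)\le 1$, which is useful for small $t$, and the bound $\prob_{m,n,T}(X>t)\le C_{m,n}\, t^{-2}$ from Theorem \ref{distr}, which is useful for large $t$. Splitting the integral at $t_0=\sqrt{C_{m,n}}$,
\begin{equation*}
E_{m,n,T}[X]\le \int_0^{t_0}1\,dt+\int_{t_0}^\infty C_{m,n}\,t^{-2}\,dt = \sqrt{C_{m,n}}+\sqrt{C_{m,n}} = 2\sqrt{C_{m,n}}.
\end{equation*}
Crucially, this bound depends only on $m$ and $n$ through $C_{m,n}$, not on $T$, so taking the supremum over $T$ is harmless and produces the desired $\ll_{m,n} 1$ estimate.

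There is essentially no obstacle: all the substantial work has already been done in proving Theorem \ref{distr}, which is the moment-type statement. What might deserve a brief comment is the use of the trivial bound for small $t$, since the constant in Theorem \ref{distr} is only meaningful once $t$ is large enough that $C_{m,n}t^{-2}\le 1$; the splitting point $t_0=\sqrt{C_{m,n}}$ is precisely the transition between the two regimes. I would also remark that the exponent $2$ in Theorem \ref{distr} is used only to guarantee integrability on $[t_0,\infty)$, so any tail decay $\prob(X>t)\ll t^{-\alpha}$ with $\alpha>1$ would in fact suffice for Theorem \ref{Asymptotic_bound_1}.
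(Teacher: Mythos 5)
Your proof is correct and is essentially the paper's own argument: the paper likewise interprets the average as the expectation $E(X_T)$, applies the layer-cake formula $E(X)=\int_0^\infty(1-F_X(t))\,dt$, and integrates the same $\ll_{m,n}t^{-2}$ tail bound (obtained there through the auxiliary variable $Y_T$ and Schmidt's theorem, which is exactly the content of Theorem \ref{distr}), splitting off a bounded initial segment of the integral. The only cosmetic difference is that you invoke Theorem \ref{distr} directly and use the trivial bound $\prob_{m,n,T}(X_T>t)\le 1$ for small $t$, whereas the paper works with $Y_T$ together with the lower bound \eqref{l_bound}; both give a constant independent of $T$.
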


Thus the asymptotic growth of the diagonal Frobenius number on average has order
\bea
\sim (\det(L))^{1/(n-m)}\,.
\eea
%
which is significantly slower than the growth of the maximum diagonal Frobenius number as $T\rightarrow\infty$.

The paper is organized as follows. In the next section we will study
basic properties of $\dfrob(A)$, its relation to Geometry of Numbers
and we will prove Theorem \ref{upper_bound} and Lemma
\ref{general}. Section 3 contains the proof of Theorem \ref{optimality} showing that
our bound on $\dfrob(A)$ is best possible. For the study of the average
behaviour of  $\dfrob(L_A)$ and, in particular,  for the proofs of Theorem \ref{distr}
and \ref{Asymptotic_bound_1} in Section 5, we will need some facts on the distribution
of sublattices of $\Z^n$ which will be collected in Section 4. Finally,
in the last section we will give a refinement of
Theorem \ref{optimality} for the special case $m=1$.

\section{Diagonal Frobenius number and Geometry of Numbers}
\label{FL}
Following the geometric approach developed in Kannan \cite{Kannan} and Kannan and Lovasz
\cite{Kannan-Lovasz}, we will make use of tools from the Geometry of
Numbers.  To this end we need the following notion:  For a lattice
$L\subset\R^n$ and a compact set $S\subset\spn_\R L$ the  {\em inhomogeneous
  minimum} $\mu(S,L)$ of $S$ with respect to $L$ is defined as the
smallest non-negative number $\sigma$ such that all lattice translates
of $\sigma\,S$ with respect to $L$, i.e., $L+\sigma\,S$ cover the
whole space $\spn_\R L$. Or
equivalently, we can describe it as
\begin{equation*}
\mu(S,L)= \min\{\sigma>0: ({\ve x}+\sigma\,S)\cap L \ne \emptyset,\text{ for
all }{\ve x}\in\spn_\R L\}.
\end{equation*}
Now let $L_A\subset\Z^n$ be the $m$-dimensional lattice generated by the
rows of the given matrix $A\in\Z^{m\times n}$ satisfying the
assumptions \eqref{assumption}.  Furthermore let
\begin{equation*}
 L_A^\perp=\{{\ve z}\in\Z^n : A\,{\ve z}={\ve 0}\}
\end{equation*}
be the $(n-m)$-dimensional lattice contained in the orthogonal
complement of $\spn_\R(L)$. Observe that (cf.~\cite[Proposition 1.2.9]{Martinet})
\begin{equation}
        \det L^\perp_A=\det L_A = \sqrt{\det A\,A^\intercal}. 
\label{eq:det}
\end{equation}
By our assumption \eqref{assumption} ii) we know that for any right
hand side ${\ve b}\in\R^m$ the set $P(A,{\ve b})$ is bounded (or
empty); hence $P(A,{\ve v})$ is a polytope.
\begin{lemma} Let $1\leq m\leq n-1$. Then
\begin{equation*}
          \dfrob(A)\leq \mu(P(A,{\ve v})-{\ve 1}, L^\perp_A),
\end{equation*}
where ${\ve 1}\in\R^n$ denotes the all $1$-vector, i.e., ${\ve 1}=(1,1,\ldots,1)^T\in\R^n$.
\label{lem:inhom_min}
\end{lemma}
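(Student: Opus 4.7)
The plan is to show that taking $t = \mu(P(A,{\ve v})-{\ve 1}, L^\perp_A)$ already witnesses the property defining $\dfrob(A)$: for every right-hand side ${\ve b} \in \{t{\ve v}+C\}\cap\Z^m$ the polytope $P(A,{\ve b})$ must contain an integer point. The underlying idea is to combine a natural but possibly non-integral non-negative real solution to $A{\ve x}={\ve b}$ with an integer but possibly negative particular solution, and then use the covering packaged in the inhomogeneous minimum to adjust the integer solution by a vector of $L^\perp_A$ so that non-negativity is restored.

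Concretely, I would start by exploiting ${\ve b}-t{\ve v}\in C$ to write ${\ve b}-t{\ve v}=A{\ve y}$ for some ${\ve y}\in\R^n_{\ge 0}$, so that ${\ve x}_0:=t{\ve 1}+{\ve y}\ge{\ve 0}$ is a real feasible point: $A{\ve x}_0=t{\ve v}+A{\ve y}={\ve b}$. Next, since assumption \eqref{assumption} i) is equivalent to $A\Z^n=\Z^m$ (via Smith normal form), one can pick an integer particular solution ${\ve x}^*\in\Z^n$ with $A{\ve x}^*={\ve b}$, though in general ${\ve x}^*\not\ge{\ve 0}$. Both points map under $A$ to ${\ve b}$, so ${\ve x}_0-{\ve x}^*\in\spn_\R L^\perp_A$, and the definition of the inhomogeneous minimum applied with $S=P(A,{\ve v})-{\ve 1}$ at scale $\sigma=t$ furnishes ${\ve z}\in L^\perp_A$ and ${\ve p}\in P(A,{\ve v})$ satisfying
\begin{equation*}
{\ve z}=({\ve x}_0-{\ve x}^*)+t({\ve p}-{\ve 1}).
\end{equation*}
Rearranging gives ${\ve x}^*+{\ve z}={\ve x}_0-t{\ve 1}+t{\ve p}={\ve y}+t{\ve p}\ge{\ve 0}$, and because ${\ve x}^*+{\ve z}\in\Z^n$ with $A({\ve x}^*+{\ve z})={\ve b}$, this is the desired non-negative integer point in $P(A,{\ve b})$.

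The one subtle point is choosing the \emph{sign} of the vector decomposed via the inhomogeneous minimum. Applying the definition instead to ${\ve x}^*-{\ve x}_0$ would yield an expression of the form $2t{\ve 1}+{\ve y}-t{\ve p}$, whose non-negativity is unclear since the coordinates of ${\ve p}\in P(A,{\ve v})$ have no a priori bound in terms of ${\ve 1}$; working with ${\ve x}_0-{\ve x}^*$ flips the sign on $t{\ve p}$ so that non-negativity becomes automatic. A secondary issue is that one needs the covering at the infimal scale $\sigma=\mu$, not merely for $\sigma>\mu$, but this is a standard consequence of the compactness of $P(A,{\ve v})-{\ve 1}$, which is itself ensured by the regularity assumption \eqref{assumption} ii).
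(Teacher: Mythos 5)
Your proposal is correct and is essentially the paper's own argument in pointwise form: both proofs take the real feasible point $t\,{\ve 1}+{\ve y}$, use assumption \eqref{assumption} i) to get an integral preimage of ${\ve b}$, and then invoke the covering of $\spn_\R L_A^\perp$ by $L_A^\perp$-translates of $t\,(P(A,{\ve v})-{\ve 1})$ — you unwind the covering definition at the point ${\ve x}_0-{\ve x}^*$ to exhibit the integer point ${\ve y}+t\,{\ve p}$ explicitly, whereas the paper packages the same step as $\mu(P(A,{\ve b})-{\ve z},L_A^\perp)\le\frac1t\mu(P(A,{\ve v})-{\ve 1},L_A^\perp)\le 1$ via translation invariance, monotonicity and homogeneity of $\mu$. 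Your remark that the covering holds at the scale $\sigma=\mu$ itself (by compactness of $P(A,{\ve v})-{\ve 1}$) correctly handles the only delicate point in working at $t=\mu$ rather than $t\ge\mu$.
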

\begin{proof} Let $t\geq \mu(P(A,{\ve v})-{\ve 1}, L^\perp_A)$, and
  let ${\ve b}\in (t\,{\ve v}+C)\cap\Z^m$, i.e., there exists a
  non-negative  vector ${\ve \alpha}\in\R^n_{\geq 0}$ such that ${\ve
    b}=A\,(t\,{\ve 1}+{\ve \alpha})$. On the other hand,  by
  \eqref{assumption} i) we know that the columns of $A$ form a
  generating system of the lattice $\Z^m$ (cf.~\cite[Corollary 4.1c]{ASch}). Thus there exists a ${\ve
    z}\in\Z^n$ such that
\begin{equation*}
 {\ve b}=A\,(t\,{\ve 1}+{\ve \alpha})=A\,{\ve z}.
\end{equation*}
So we have that $P(A,{\ve b})-{\ve z}\subset \spn_\R(L_A^\perp)$ and
it suffices to prove that $P(A,{\ve b})-{\ve z}$ contains an integral
point of $L_A^\perp$, for which it is enough to verify
\begin{equation*}
   \mu(P(A,{\ve b})-{\ve z}, L_A^\perp)\leq 1.
\end{equation*}
Since the inhomogeneous minimum is invariant with respect to
translations  and since $P(A,t{\ve v})+{\ve \alpha}\subseteq P(A, {\ve
  b})$ we get
\begin{equation*}
\begin{split}
 \mu(P(A,{\ve b})-{\ve z}, L_A^\perp) & = \mu(P(A,{\ve b})-(t\,{\ve
   1}+{\ve \alpha}), L_A^\perp) \\
&\leq \mu(P(A,t{\ve v})-t\,{\ve
   1}, L_A^\perp)=\mu(t\,(P(A,{\ve v})-{\ve
   1}), L_A^\perp)\\
&\leq \frac{1}{t}\mu(P(A,{\ve v})-{\ve
   1}, L_A^\perp)\leq 1.
\end{split}
\end{equation*}
\end{proof}

Thus the diagonal Frobenius number is well defined. Next we want to
point out that $\dfrob(A)$ depends only on the lattice $L_A$  and not on the
specific basis of that lattice as given by the rows of $A$. If the
rows of a matrix $\overline{A}$ also build a basis of $L_A$,
then there exists an unimodular matrix $U\in\Z^{m\times m}$ such that
$A=U\,\overline{A}$, which implies $\dfrob(A)=\dfrob(\overline{A})$.
Thus it is justified to denote the diagonal Frobenius (also) by $\dfrob(L_A)$.

For the proof of Theorem \ref{upper_bound}, which will be based on
Lemma \ref{lem:inhom_min} and an upper
bound on the inhomogeneous minimum, we need one more concept from
Geometry of Numbers, namely Minkowski's successive minima. For a
$k$-dimensional lattice $L$ and a $0$-symmetric convex body $K\subset
\spn_\R L$ the $i$-successive minimum of $K$ with respect to $L$ is
defined as
\begin{equation*}
 \lambda_i(K,L)=\min\{\lambda>0 : \dim(\lambda\,K\cap L)\geq i\},\quad
 1\leq i\leq k,
\end{equation*}
i.e., it is the smallest factor such that $\lambda\,K$ contains at least $i$
linearly independent  lattice points of $L$. We will need here only two results on the
successive minima. One is Minkowski's celebrated theorem on successive
minima which states (cf.~\cite[Theorem 23.1]{peterbible})
\begin{equation}
 \frac{2^k}{k!}\det L\leq \lambda_1(K,L)\,\lambda_2(K,L)\cdot\ldots\cdot\lambda_k(K,L)\,\vol(K)\leq
 2^k\det L,
\label{eq:second_minkowski}
\end{equation}
where $\vol(K)$ denotes the volume of $K$. The other one is known as
Jarnik's inequalities which give bounds on the inhomogeneous minimum
in terms of the successive
minima, namely (cf.~\cite[p.~99, p.~106]{GrLek})
 \begin{equation}
\frac{1}{2}\lambda_k(K,L)\leq \mu(K,L)\leq  \frac{1}{2}\left(\lambda_1(K,L)+\lambda_2(K,L)+\cdots
   +\lambda_k(K,L)\right).
\label{eq:jarnik_upper}
\end{equation}
We remark that both inequalities can be improved in the special case
of a ball, but since we are mainly not interested in constants
depending on the dimension we do not apply these improvements.

\begin{proof}[Proof of Theorem \ref{upper_bound}] Let $B_{n-m}$ be the
  $(n-m)$ dimensional  ball of radius 1 centered at the origin in the space $\spn_\R L_A^\perp$. By
  definition of ${\ve v}$ we have ${\ve 1}+B_{n-m}\subset P(A,{\ve v})$
  and so with Lemma \ref{lem:inhom_min}
\begin{equation}
\begin{split}
  \dfrob(A)&\leq \mu(P(A,{\ve v})-{\ve 1}, L_A^\perp)\leq \mu(B_{n-m},
 L_A^\perp)\\
&\leq \frac{n-m}{2}\lambda_{n-m}(B_{n-m},L_A^\perp),
\label{eq:h1}
\end{split}
\end{equation}
where the last inequality follows from \eqref{eq:jarnik_upper}. All
vectors of the lattice $L_A^\perp$ are integral vectors, thus
 $\lambda_i(B_{n-m},L_A^\perp)\geq 1$, $1\leq i\leq n-m$. Hence
from \eqref{eq:second_minkowski} we get
\begin{equation}
\lambda_{n-m}(B_{n-m},L_A^\perp)\vol(B_{n-m})\leq 2^{n-m}\det
L_A^\perp
\end{equation}
 and with \eqref{eq:h1} we conclude (cf.~\eqref{eq:det})
\begin{equation*}
\dfrob(A)\leq \frac{n-m}{2} \frac{2^{n-m}}{\vol(B_{n-m})}\sqrt{\det (AA^T)}.
\end{equation*}
\end{proof}

Finally we come to the proof
of Lemma \ref{general}.
\begin{proof}[Proof of Lemma \ref{general}] On account of Lemma
  \ref{lem:inhom_min} it suffices to show that for any ${\ve w}\in\interior{C}\cap\Z^n$
  the vector $\sqrt{\frac{\det A\,A^T}{n-m+1}}\,{\ve w}$ is contained
  in ${\ve v}+C$.  For short we set $\gamma=\sqrt{\det(A\,A^T)/(n-m+1)}$.

 Let ${\ve w}\in\interior{C}\cap\Z^n$. Then $P(A,{\ve w})$ is an
 $(n-m)$-dimensional polytope, and in the following we show that there
 exists a point
${\ve c}\in P(A,{\ve w})$ with components
\begin{equation}
    c_i\geq \frac{1}{\gamma} ,\, 1\leq i\leq n.
\label{eq:center}
\end{equation}
Each vertex ${\ve y}$ of the polytope $P(A,{\ve w})$ is the unique solution
of a linear system consisting of the  $m$ equations $A\,x={\ve w}$ and
$n-m$ equations of the type $x_{k_j}=0$, $1\leq j\leq n-m$. Hence, for
each vertex ${\ve y}$ we can find a subset $I_{\ve y}\subset\{1,\dots,n\}$
of cardinality $m$
such that $A_{I_{\ve y}}\,(y_j : j\in I_{\ve y})^\intercal={\ve w}$ and $y_j=0$
for $j\notin I_{\ve y}$. Here $A_{I_{\ve y}}$ denotes the $m\times
m$-minor of $A$ consisting of the columns with index in $I_{\ve y}$. Thus each non-zero coordinate $y_i$ of a vertex
satisfies
\begin{equation}
                    y_i\geq \frac{1}{\det A_{I_{\ve y}}}.
\label{eq:lowerbound}
\end{equation}
Taking the barycenter ${\ve c}=\frac{1}{\# V} \sum_{{\ve y}\in V}{\ve y}$,
where $V$ denotes the set of all vertices of $P(A,{\ve w})$,  we get
a relative  interior point of $P(A,{\ve w})$, i.e., all coordinates of ${\ve
  c}$ are positive. By the inequality of the arithmetic and geometric
mean we have for any sequence of positive numbers $a_1,\dots,a_l$
\begin{equation*}
                 \sum_{i=1}^l\frac{1}{a_i}\geq \frac{l^2}{\sum_{i=1}^l
                   a_i},
\end{equation*}
and so we get by \eqref{eq:lowerbound}
\begin{equation*}
                    c_i\geq \frac{\#V}{\sum_{{\ve y}\in V} \det A_{I_{\ve y}}}.
\end{equation*}
Hence together with the Cauchy-Schwarz
inequality and  the Cauchy-Binet formula we get
\begin{equation*}
\begin{split}
 c_i &\geq \frac{\sqrt{\#V}}{\sqrt{\sum_{{\ve y}\in V} (\det A_{I_{\ve
         y}})^2}} \geq \frac{\sqrt{\#V}}{\sqrt{\sum_{m\times m \text{ minors } A_{I_m}} (\det
   A_{I_m})^2}}\\
&=\frac{\sqrt{\#V}}{\sqrt{\det A\,A^T}}.
\end{split}
\end{equation*}
Since $\#V\geq n-m+1$ we obtain \eqref{eq:center} which shows  that the  vector
$\gamma\,{\ve w}$ can be written as a positive
linear combination of the columns of $A$, where each scalar is at
least $1$.  Thus $\gamma\,{\ve w}\in {\ve v}+C$.
\end{proof}

We want to point out that the assumption in Lemma \ref{general} on
${\ve w}$ to be an interior point is necessary. For instance take ${\ve
  w}=(1,0)^T$ and
\begin{equation*}
A= \begin{pmatrix} 0 & 1 & 2 \\
                     1 & 1 & 0 \end{pmatrix}.
\end{equation*}
Then all points of the form $(2\,l+1)\,{\ve w}$, $l\in\N$, are not
representable as non-negative integral combination of the columns.

\section{Proof of Theorem \ref{optimality}}

We will construct a sequence $A_t\in\Z^{m\times n}$ as follows.
Let us choose any $(n-m)$--dimensional subspace $S$ such that the
lattice $M=S\cap\Z^n$ has rank $n-m$ and the polyhedron $Q_S=\{{\ve
  1}+S\}\cap\R_{\ge 0}^n$ is bounded. Let $B^n$ be the $n$-dimensional
unit ball of radius $1$ centered at the origin.
Put $\lambda_i=\lambda_i(B^n\cap S, M)$, $1\le i\le n-m$, and choose
$n-m$ linearly independent integer vectors
${\ve b}_i$ corresponding to $\lambda_i$, i.e.,  $||{\ve b}_i||=\lambda_i$, $1\le i\le n-m$.
Put 
\bea
\xi=\frac{2^{n-m-1}}{(n-m)!\omega_{n-m}\diam(Q_S)\prod_{i=1}^{n-m-1}\lambda_i}\,.
\eea
Here $\diam(Q_S)$ denotes the diameter of $Q_S$, i.e., the maximum distance
between two points of $Q_S$.
Let $P$ be the $(m+1)$--dimensional subspace orthogonal to the vectors
${\ve b}_1, \ldots, {\ve b}_{n-m-1}$, so that $S^\bot \subset P$,
where $S^\bot$ denotes the orthogonal complement of  $S$.



There exists a sequence of  $m$--dimensional subspaces $P_{t}\subset P$, $t=1,2, \ldots$, with the following properties:
\begin{itemize}
\item[(P1)] the lattice $M_{t}=P_{t}\cap \Z^n$ has rank $m$ and $\det(M_{t})>t$;
\item[(P2)] Putting $S_t=P^\bot_t$ and $L_{t}=S_{t}\cap \Z^n$, the diameter of the polyhedron $Q_{t}=\{{\xi\det(L_{t})}{\ve 1}+S_{t}\}\cap \R^n_{\ge 0}$
satisfies the inequality
\be
\diam(Q_{t}) < \frac{3}{2}\, \xi \,  \det(L_{t})\,\diam(Q_S)\,.
\label{bounds_for_diameter}
\ee
\end{itemize}

\begin{rem} The sequence $P_t$ clearly exists as it is enough to consider a sequence of approximations
of a fixed basis of $S^\bot$ by $m$ integer vectors from $P$ and then observe that there exists only a finite number of integer
sublattices of bounded determinant.\end{rem}


Let $\lambda_i(t)=\lambda_i(B^n\cap S_{t}, L_{t})$ and let ${\ve b}_i(t)$, $1\le i\le n-m$, be  linearly independent integer vectors corresponding to the successive minima $\lambda_i(t)$.
We will now show that for sufficiently large $t$
\be
\lambda_i( t)=\lambda_i\,,\;\;\;1\le i\le n-m-1\,.
\label{lambda_equals}
\ee
Since $P_t \subset P$, the lattice $L_{t}$ contains the vectors ${\ve
  b}_i$, $1\le i\le n-m-1$. Noting that $\det(L_{
  t})=\det(M_t)\rightarrow \infty$ as $t\rightarrow \infty$, the lower
bound in Minkowski's second theorem \eqref{eq:second_minkowski} implies that $\lambda_{n-m}(t)\rightarrow \infty$ as $t\rightarrow\infty$. This, in turn, implies that
for sufficiently large $t$ the first $n-m-1$ successive minima
$\lambda_i( t)$ are attained on vectors ${\ve b}_i$, $1\le i\le
n-m-1$, so that \eqref{lambda_equals}
holds. Hence by \eqref{eq:second_minkowski} and \eqref{lambda_equals} we may write for sufficiently large $t$
\bea
\frac{2^{n-m}\det(L_{t})}{(n-m)!\omega_{n-m}}\le \lambda_1(t)\lambda_2(t)\cdots\lambda_{n-m}(t) = \lambda_{n-m}(t) \prod_{i=1}^{n-m-1}\lambda_i.
\eea
Thus, when  $t$ is large enough  we have
\be
\lambda_{n-m}(t)\ge\frac{2^{n-m}\det(L_{t})}{(n-m)!\omega_{n-m}\prod_{i=1}^{n-m-1}\lambda_i}\,
\label{lambda_in}
\ee
%
%

%
%


Now choose any basis ${\ve a}_1, \ldots, {\ve a}_m\in\Z^n$ of the
lattice $M_{t}$ and let $A_t$ be the matrix with rows ${\ve a}_1^T,
\ldots, {\ve a}_m^T$.
Noting that the subspace $P^\bot$ has codimension $1$ in $S$, take a
vertex  ${\ve p}_t$  of $Q_{t}$ such that ${\ve p}_t+P^\bot$ does not intersect the interior of $Q_{t}$. 
Choose a supporting hyperplane $H$ of the convex cone $\R^n_{\ge 0}$ at the point ${\ve p}_t$ such that $\{{\ve p}_t+P^\bot\}\subset H$. 
Next we take a point ${\ve z}_t\in \Z^n$ with following properties:
\begin{itemize}
\item[(Z1)] $H$ separates ${\ve z}_t$ and $\R^{n}_{\ge 0}$;
\item[(Z2)] with respect to the maximum norm $||\cdot||_\infty$, ${\ve z}_t$ is the closest
  point to ${\ve p}_t$ 
that satisfies (Z1).
\end{itemize}
Then we clearly have
\be ||{\ve z}_t-{\ve p}_t||_\infty\le 1\,,\label{norm_of_difference}\ee
Consider the polytope $Q_{{\ve z}_t}=\{S_t+{\ve z}_t\}\cap \R^n_{\ge 0}$.  By (\ref{norm_of_difference}), the diameter of $Q_{{\ve z}_t}$ satisfies
\bea
\diam(Q_{{\ve z}_t})\le \diam(Q_{t})+2\sqrt{n}\,.
\label{diameter_diameter}
\eea
Thus, together with  \eqref{bounds_for_diameter}, \eqref{lambda_in}
and by the choice of the number $\xi$, for all sufficiently large $t$
\be
\diam(Q_{{\ve z}_t})<\lambda_{n-m}(t)\,.
\label{diam_is_less}
\ee
Note that, by the choice of the point ${\ve z}_t$, the affine subspace ${\ve z}_t+P^\bot$ does not intersect the cone $\R^n_{\ge 0}$
and, on the other hand, for all sufficiently large $t$ the first $n-m-1$ successive minima of the lattice $L_t$ are attained on the vectors ${\ve b}_i$, $1\le i \le n-m-1$, that
belong to the subspace $P^\bot$.
The inequality (\ref{diam_is_less}) now implies that $Q_{{\ve z}_t}$ does not contain integer points when $t$ is large enough.

By \eqref{norm_of_difference}, ${\ve z}_t\in \{(\xi\det(L_{t})-1){\ve 1}+\R^n_{\ge 0}\}$, so that $A_t\,{\ve z}_t\in \{(\xi\det(L_{t})-1){\ve v}+C\}$.
Thus for all sufficiently large $t$ we have
\bea g(A_t)\ge \xi\det(L_{t})-1\,.\eea
The theorem is proved.

\section{Distribution of sublattices of $\Z^n$}

\label{Schmidts_results}


This section which will collect several results due to W. Schmidt
\cite{Schmidt} on the distribution of integer lattices essentially
coincides
with Section 3 of Aliev and Henk \cite{AlievHenk}. However we include it for completeness.   Two lattices $L$, $L'$
are similar if there is a linear bijection $\phi: L\rightarrow L'$ such that for some fixed $c>0$ we have $||\phi({\ve x})||=c||{\ve x}||$.
Let ${\tilde O}_m$ be the group of matrices $K=({\ve k}_1, \ldots, {\ve k}_m)\in GL_m(\R)$ whose columns ${\ve k}_1, \ldots, {\ve k}_m$
have $||{\ve k}_1||= \cdots= ||{\ve k}_m||\neq 0$ and inner products $\langle {\ve k}_i, {\ve k}_j \rangle=0$ for $i\neq j$.
It is the product of the orthogonal group $O_m$ and the group of nonzero multiples of the identity matrix.
When $X=({\ve x}_1, \ldots, {\ve x}_m)\in GL_m(\R)$, we may uniquely write the matrix $X$ in the form
\be
X=KZ\,,
\label{Schmidt1.2}
\ee
where $K\in {{\tilde O}_m}$ and
\be
Z=\left ( \begin{array}{llll} 1 & x_{12} & \cdots & x_{1m}\\
                              0 & y_2 & \cdots & x_{2m}\\
                              \vdots\\
                              0 & 0 & \cdots & y_m
                         \end{array}\right )
\label{H_matrix}\ee
with $y_2,\ldots,y_m>0$. The matrices $Z$ as in (\ref{H_matrix}) form the generalized upper half--plane $\Hp=\Hp_m$. For $Z\in\Hp$ and
$M\in GL_m(\R)$, we may write $ZM$ in the form (\ref{Schmidt1.2}), that is we uniquely have $ZM=KZ_M$ with $K\in{\tilde O}_m$ and $Z_M\in \Hp$.
Thus $GL_m(\R)$ acts on $\Hp$; to $M$ corresponds the map $Z\mapsto Z_M$. In particular, $GL_m(\Z)$, as a subgroup of $GL_m(\R)$, acts on $\Hp$.
We will denote by $\F$ a fundamental domain for the action of $GL_m(\Z)$ on $\Hp$. We will also write $\mu$
for the $GL_m(\R)$ invariant measure on $\Hp$ with $\mu(\F)=1$.


Suppose now that $1<m\le n$. There is a map (see p. 38 of Schmidt \cite{Schmidt} for details) from lattices of rank $m$ in $\R^n$ onto the set $\Hp/GL_m(\Z)$ of orbits of $GL_m(\Z)$ in $\Hp$. The lattices $L$, $L'$ are similar precisely if they have the same image in $\Hp/GL_m(\Z)$, hence the same image in $\F$. Similarity classes of lattices are parametrized by the elements of a fundamental domain $\F$.

A subset $\D\subset \Hp$ is called {\em lean} if $\D$ is contained in some fundamental domain $\F$. For $a>0$, $b>0$, let $\Hp(a,b)$
consists of $Z\in\Hp$ (in the form (\ref{H_matrix})) with
\bea
y_{i+1}\ge a y_i\,,\;\;\;1\le i <m, \quad
|x_{ij}|\le b y_i\,,\;\;\;1\le i<j\le m.
\eea
Here we assume $y_1=1$.

Clearly, there is one-to-one correspondence between primitive vectors
${\ve b}\in\Z^n$ and the primitive $(n-1)$--dimensional sublattices of
 $\Z^n$. This correspondence  was used in \cite{AlievHenk} to
 investigating the average behavior of  $\frob({\ve a})$.

Let now $P(\D, T)$, where $\D$ is lean, be the number of primitive
lattices $L\subset\Z^n$ with similarity class in $\D$ and determinant
$\le T$. 
\begin{theo}[Schmidt \protect{\cite[Theorem 2]{Schmidt}}]
Suppose $1<m<n$ and let $\D\subset\Hp(a,b)$ be lean and Jordan-measurable. Then, as $T\rightarrow\infty$,
\be
P(\D, T)\sim c_2(m,n) \mu(\D)T^n
\label{Schmidt_1.9}
\ee
with
\bea
c_2(m,n)=\frac{1}{n} {n \choose m} \frac{\omega_{n-m+1}\cdots \omega_n}{\omega_1 \omega_2 \cdots \omega_m}\cdot \frac{\zeta(2)\cdots\zeta(m)}{\zeta(n-m+1)\cdots\zeta(n)}\,.
\eea
Here $\omega_l$ is the volume of the unit ball in $\R^l$ and $\zeta(\cdot)$ is the Riemann zeta--function.
\label{Schmidt_th_2}
\end{theo}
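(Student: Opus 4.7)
The plan is to translate the counting problem for primitive sublattices into a volume calculation on a suitable matrix parameter space via the Iwasawa-type decomposition \eqref{Schmidt1.2}, then to pass from all sublattices to primitive ones by M\"obius inversion, and finally to obtain the asymptotic through a classical lattice-point count.

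First I would identify a rank-$m$ sublattice $L\subset\Z^n$ with the $GL_m(\Z)$-orbit of $m\times n$ integer matrices $M$ whose rows form a basis of $L$; then $\det L=\sqrt{\det M\,M^T}$ and, as in \eqref{Schmidt1.2}, the similarity class of $L$ is encoded by the factor $Z\in\F$ arising from the decomposition of $M^T$. Writing $M^T = Y\,Q\,Z_0$ with $Y>0$ a scaling factor, $Q$ ranging over the Stiefel manifold $V_m(\R^n)$ of orthonormal $m$-frames in $\R^n$, and $Z_0\in\F$, a Jacobian computation factors the Lebesgue volume on $\R^{n\times m}$ as the product of $Y^{nm-1}\,dY$, the natural measure on $V_m(\R^n)$, and the $GL_m(\R)$-invariant measure $\mu$ on $\F$. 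Integrating over the region $\{Z_0\in\D,\ \sqrt{\det M\,M^T}\le T\}$ then yields a leading term proportional to $\mu(\D)\,T^n$, with the volume of $V_m(\R^n)/O_m$ producing the explicit combinatorial factor $\binom{n}{m}\,\omega_{n-m+1}\cdots\omega_n/(\omega_1\cdots\omega_m)$.

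Next, a classical lattice-point count in the spirit of Lipschitz's theorem---valid since $\D$ is lean and Jordan-measurable---shows that the number of $GL_m(\Z)$-orbits of integer matrices lying in this region is asymptotic to its volume, giving the stated estimate but with \emph{all} rank-$m$ sublattices rather than just primitive ones. To extract primitive sublattices I would use the fact that every sublattice of $\Z^n$ of rank $m$ arises uniquely as a finite-index subgroup of its primitive closure (with the same similarity class), combined with the Dirichlet series identity $\sum_{k\ge 1}(\#\{\text{index-}k\text{ subgroups of }\Z^m\})\,k^{-s} = \zeta(s)\zeta(s-1)\cdots\zeta(s-m+1)$. M\"obius-inverting the resulting convolution and specializing to the growth rate $T^n$ produces the zeta quotient $\zeta(2)\cdots\zeta(m)/(\zeta(n-m+1)\cdots\zeta(n))$.

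The main technical obstacle, in my view, is the Jacobian computation for the Iwasawa decomposition in the non-square case, together with the bookkeeping that ensures each lattice is counted exactly once; this is precisely where the leanness of $\D$ is essential, guaranteeing that distinct similarity classes are represented by distinct $Z\in\D$. A secondary difficulty is controlling error terms from matrices whose similarity class approaches $\partial\D$ or degenerates, and the restriction of $\D$ to the Siegel-set-like region $\Hp(a,b)$ is designed exactly to keep these contributions bounded so that Jordan-measurability yields the asymptotic equality rather than just an upper bound.
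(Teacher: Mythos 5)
This statement is not proved in the paper at all: it is quoted verbatim as Theorem~2 of W.~Schmidt's article \emph{The distribution of sublattices of $\Z^m$} (Monatsh.\ Math.\ \textbf{125} (1998)), so there is no in-paper argument to compare your proposal against; any assessment has to be against Schmidt's original proof.

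Measured against that, your outline follows the right general strategy (parametrize rank-$m$ sublattices by $GL_m(\Z)$-orbits of basis matrices, factor the measure through a scaling parameter, a frame/Grassmannian part and the similarity-class parameter $Z\in\F$, count lattice orbits by volume via a Lipschitz-type principle, and pass from all sublattices to primitive ones through the Dirichlet series $\sum_k a_k k^{-s}=\zeta(s)\zeta(s-1)\cdots\zeta(s-m+1)$ for the saturation/index decomposition). Two caveats. First, a bookkeeping error: the M\"obius/Dirichlet step only produces the denominator $\zeta(n-m+1)\cdots\zeta(n)$, since $N_{\mathrm{all}}(T)=\sum_k a_k N_{\mathrm{prim}}(T/k)$ and one specializes at $s=n$; the numerator $\zeta(2)\cdots\zeta(m)$ does not come from this inversion but from the volume of the $GL_m(\Z)$ (essentially $SL_m(\Z)\backslash SL_m(\R)$) fundamental domain entering the orbit count, which is hidden by the normalization $\mu(\F)=1$ used here. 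Second, what you list as ``technical obstacles'' is in fact the substance of Schmidt's proof: the Jacobian factorization in the non-square case, and above all the uniform error control on the Siegel-type domain $\Hp(a,b)$, which is not compact (the conditions $y_{i+1}\ge a y_i$, $|x_{ij}|\le b y_i$ allow the $y_i$ to grow into the cusp), so the naive ``Jordan-measurable region, hence lattice points $\sim$ volume'' step does not apply directly and needs the cusp estimates that occupy much of Schmidt's paper. As a blind sketch it is a credible reconstruction of the known route, but it is an outline rather than a proof, and for the purposes of this paper the correct move is simply to cite Schmidt, as the authors do.
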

Thus, roughly speaking, the proportion of primitive lattices with similarity class in $\D$ is $\mu(\D)$.

As before we  denote by $B^n\subset\R^m$ the $n$--dimensional ball of
radius $1$.
Given a vector ${\ve u}=(u_1, u_2, \ldots, u_{m-1})^T\in\R^{m-1}$ with
$u_i\ge 1 \;(1\le i <m)$, the $m$-dimensional sublattices $L\subset\Z^n$ with
\bea
\frac{\lambda_{i+1}(B^n\cap \spn_{\R}(L),L)}{\lambda_i(B^n\cap \spn_{\R}(L), L)}\ge u_i
\eea
form a set of similarity classes, which will be denoted by
$\D({\ve u})$.
\begin{theo}[Schmidt \protect{\cite[Theorem 5 (i)]{Schmidt}}]
The set $\D({\ve u})$ may be realized as a lean, Jordan--measurable subset of $\Hp$. We have
\be
\mu(\D({\ve u}))\ll_{m,n}\prod_{i=1}^{m-1} u_i^{-i(m-i)}\,.
\label{Th_5_i}
\ee
Here $\ll_{m,n}$ denotes the Vinogradov symbol with the constant depending on $m$ and $n$ only.
\label{Schmidt_th_5}
\end{theo}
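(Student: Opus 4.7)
The plan is to realise $\D({\ve u})$ explicitly inside a Siegel-type fundamental domain for $GL_m(\Z)$ acting on $\Hp$, and then estimate the $GL_m(\R)$-invariant measure by direct integration in Iwasawa coordinates. The first step is to fix a fundamental domain $\F$ of Minkowski-reduced type, so that for every $Z\in\F$ written as $Z=DN$ with $D=\mathrm{diag}(1,y_2,\ldots,y_m)$ and $N$ unipotent upper triangular with entries $x_{ij}$, the diagonal entries satisfy $y_i\asymp_m \lambda_i$ where $\lambda_i=\lambda_i(B^n\cap\spn_\R L,L)$ for the corresponding lattice $L$, and moreover $|x_{ij}|\ll y_i$. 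In such coordinates $\D({\ve u})$ becomes (up to a dimension-dependent constant) the subset of $\F$ cut out by the inequalities $y_{i+1}/y_i\ge c_m u_i$ for $1\le i\le m-1$.

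Next I would write the invariant measure in Iwasawa form. The $GL_m(\R)$-invariant measure on $\Hp\cong GL_m(\R)/\tilde O_m$, restricted to $Z=DN$, factors as a product
\begin{equation*}
d\mu\;\propto\;\Bigl(\prod_{i=2}^{m} y_i^{-\alpha_i}\,dy_i\Bigr)\Bigl(\prod_{1\le i<j\le m}dx_{ij}\Bigr),
\end{equation*}
where the weights $\alpha_i$ come from the modular function of the Borel subgroup. Integrating out each $x_{ij}$ over its (Minkowski-reduced) range of length $O(y_i)$ produces a factor of $y_i$ for every pair $i<j$, i.e.\ a total factor of $\prod_{i=1}^{m-1}y_i^{m-i}$. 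After this step only a monomial integral in $y_2,\ldots,y_m$ remains, with the region of integration controlled by $y_{i+1}\ge c_m u_i y_i$ from below and by the Siegel bounds from above.

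I would then perform the substitution $t_i=y_{i+1}/y_i$ for $1\le i\le m-1$, so that $y_j=\prod_{k<j} t_k$. The region of integration becomes a product $\{t_i\ge c_m u_i\}$ (capped by the global Siegel bounds), and every $y_j^{\beta_j}$ becomes $\prod_{k<j} t_k^{\beta_j}$. A routine bookkeeping shows that the resulting integrand is $\prod_{i=1}^{m-1} t_i^{-i(m-i)-1}$, whence a one-variable integration yields the claimed bound $\mu(\D({\ve u}))\ll_{m,n}\prod_{i=1}^{m-1}u_i^{-i(m-i)}$. The main obstacle is precisely this exponent bookkeeping: one has to see that the combined contribution of the modular weights $\alpha_i$ and of the $x_{ij}$-integrations produces exactly $-i(m-i)-1$ in the $t_i$ integrand. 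The conceptual reason is that $i(m-i)$ is the dimension of the Grassmannian $\mathrm{Gr}(i,m)$, which parametrises the $i$-th step of the flag determined by a Minkowski-reduced basis; it equals $\#\{(p,q):p\le i<q\le m\}$, the number of positive roots $e_p-e_q$ crossing level $i$. This geometric identification is both the proof of the exponent and the check that one has not miscounted.
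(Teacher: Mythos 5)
This statement is not proved in the paper at all: Theorem \ref{Schmidt_th_5} is imported as a black box from Schmidt \cite{Schmidt} and is only \emph{used} (in Section 5), so the comparison is really with Schmidt's original argument, and your sketch is essentially a reconstruction of it: pass to a reduced Siegel-type domain $\Hp(a,b)$ in the coordinates \eqref{H_matrix}, where $y_i\asymp_m\lambda_i$ and $|x_{ij}|\le b\,y_i$, observe that $\D({\ve u})$ is then contained in the region $y_{i+1}\ge c_m u_i\,y_i$, and integrate the invariant measure. Your asserted bookkeeping does check out, but only once the unspecified weights are pinned down correctly: with the lattice basis given by the columns of $X=KZ$, the associated form is $Z^TZ$ up to scaling, and the invariant measure is $d\mu\propto\prod_{i=2}^{m}y_i^{-i}\,dy_i\prod_{i<j}dx_{ij}$ (for $m=2$ this is the hyperbolic measure $dx\,dy/y^2$, a useful sanity check), i.e.\ $\alpha_i=i$ in your notation. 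Then integrating out the $x_{ij}$ over $|x_{ij}|\le b\,y_i$ contributes $\prod_{i}y_i^{\,m-i}$, the substitution $t_i=y_{i+1}/y_i$ has Jacobian $\prod_{j=1}^{m-1}y_j$, so the exponent of $y_j$ is $m-2j+1$ for $2\le j\le m-1$ and $-m$ for $j=m$, and the exponent of $t_i$ equals $\sum_{j=i+1}^{m-1}(m+1-2j)-m=-i(m-i)-1$; since $i(m-i)\ge 1$, the integrals $\int_{\gg u_i}^{\infty}t_i^{-i(m-i)-1}\,dt_i\ll u_i^{-i(m-i)}$ converge with no upper truncation, giving \eqref{Th_5_i}. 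What remains to make this complete are the standard reduction-theory inputs you invoke without proof (a Minkowski-reduced domain lies in some $\Hp(a,b)$, and there $y_i\asymp_m\lambda_i$; note only the containment of $\D({\ve u})$ in $\{y_{i+1}/y_i\gg_m u_i\}$ is needed for an upper bound) and the leanness and Jordan-measurability assertions of the statement, which your sketch does not address but which are needed in the paper for combining this bound with Theorem \ref{Schmidt_th_2}. Modulo these fill-ins, your proposal is a sound proof of the cited result, along the same lines as Schmidt's.
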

%
%
%





\section{The average behaviour}
We recall that by \eqref{eq:h1} we have
\bea
\dfrob(A)\le \frac{n-m}{2} \lambda_{n-m}(B^n\cap \spn_\R(L_A^\bot), L_A^\bot)\,
\eea
where $B^{n}$ is the $n$-dimensinal ball of radius 1 centered at the origin.
Thus with $L=L_A$, $\Gamma=(\det(L))^{-\frac{1}{n-m}}L^\bot$ we
may write
\be
\dfrob(L)\le \frac{(n-m)(\det(L))^{\frac{1}{n-m}}}{2}
\lambda_{n-m}(B^n\cap \spn_{\R}(\Gamma), \Gamma).
\label{bound_for_d}\ee
Observe, that $\det(L)=\det(L^\bot)$ (cf.~\eqref{eq:det}) and that the
determinant of $\Gamma$ is 1.
%
%
We consider the sequence of discrete random variables
$X_T: G(m,n,T)\rightarrow \R_{\ge 0}$ defined as
\bea
X_T(L)= \frac{\dfrob(L)}{(\det(L))^{\frac{1}{n-m}}}\,.
\eea
Recall that the {\em cumulative distribution function} (CDF) $F_T$ of $X_T$ is defined for $t\in\R_{\ge 0}$ as
\bea
F_T(t)=\prob_{m,n,T}(X_T\le t\,)\,.
\eea

In order to apply Schmidt's result stated in the previous section, let
for a real number $u\ge 1$, ${\ve \delta}_i(u)=(u_1, u_2, \ldots, u_{n-m-1})$ be the vector with $u_i=u$ and $u_j=1$ for all $j\neq i$. Define the set $\D(u)$ of similarity classes as (cf.~Section 3)
\bea
\D(u)= \bigcup_{i=1}^{n-m-1}\D({\ve \delta}_i(u))\,.
\eea
By (\ref{Th_5_i}) the measure of this set satisfies
\be
\mu(\D(u))\ll_{m,n} \frac{1}{{u}^{n-m-1}}\,.
\label{measure_of_union}
\ee
%
%
Let $Y_T: G(m,n,T)\rightarrow \R_{>0}$ be the sequence of random variables defined as
\bea
Y_T(L)= \sup\{v\in \R_{> 0}: L \in \D(c_1v^{2/(n-m-1)})\}\,,
\eea
where the constant $c_1=c_1(m,n)$ is given by
\bea
c_1= {\omega_{n-m}^{\frac{2}{(n-m)(n-m-1)}}}/{(n-m)^{2/(n-m-1)}}\,.
\eea
Since the set $\D(1)$ contains all similarity classes we have  for all $L\in G(m,n,T)$
\be Y_T(L)\ge c_1^{-(n-m-1)/2}.\label{l_bound}\ee
Next we  need the following observation
\begin{lemma} Let $\lambda_i:=\lambda_{i}(B^n\cap
\spn_{\R}(\Gamma),\Gamma)$, $1\le i\le n-m$, and
let
$\lambda_{n-m}>\lambda>0$. Then there exists an index $i\in\{1,\dots,n-m-1\}$ with
\bea
\frac{\lambda_{i+1}}{\lambda_i}> c_2(m,n) \lambda^{2/(n-m-1)}\,,
\eea
where
$c_2(m,n)=2^{-\frac{2}{n-m-1}}\omega_{n-m}^{\frac{2}{(n-m)(n-m-1)}}$.
\label{lambda}
\end{lemma}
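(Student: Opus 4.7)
The plan is to argue by contradiction: assume that every consecutive ratio $\lambda_{i+1}/\lambda_i$ is at most $R := c_2(m,n)\lambda^{2/(n-m-1)}$ for $1\le i\le n-m-1$, and then derive an upper bound on $\lambda_{n-m}$ that contradicts the hypothesis $\lambda_{n-m}>\lambda$.

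First I would translate the assumption into lower bounds on the individual $\lambda_i$. Since the $\lambda_i$ are non-decreasing and each successive ratio is at most $R$, iterating gives $\lambda_i \ge \lambda_{n-m}/R^{n-m-i}$ for $1\le i\le n-m$. Multiplying these out,
\begin{equation*}
\prod_{i=1}^{n-m}\lambda_i \;\ge\; \frac{\lambda_{n-m}^{n-m}}{R^{0+1+2+\cdots+(n-m-1)}} \;=\; \frac{\lambda_{n-m}^{n-m}}{R^{(n-m)(n-m-1)/2}}.
\end{equation*}

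Next I would bring in Minkowski's second theorem \eqref{eq:second_minkowski} applied to the lattice $\Gamma$ and the convex body $B^n\cap \spn_\R(\Gamma)$. Since $\Gamma$ is $(n-m)$-dimensional with $\det(\Gamma)=1$ and the body is an $(n-m)$-dimensional unit ball of volume $\omega_{n-m}$, the upper bound in \eqref{eq:second_minkowski} reads
\begin{equation*}
\prod_{i=1}^{n-m}\lambda_i \;\le\; \frac{2^{n-m}}{\omega_{n-m}}.
\end{equation*}

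Combining the two estimates gives $\lambda_{n-m}^{n-m}\le R^{(n-m)(n-m-1)/2}\cdot 2^{n-m}/\omega_{n-m}$, and taking $(n-m)$-th roots yields
\begin{equation*}
\lambda_{n-m} \;\le\; R^{(n-m-1)/2}\cdot \frac{2}{\omega_{n-m}^{1/(n-m)}}.
\end{equation*}
Substituting $R = c_2(m,n)\lambda^{2/(n-m-1)}$ with $c_2(m,n)=2^{-2/(n-m-1)}\omega_{n-m}^{2/((n-m)(n-m-1))}$, the powers of $2$ and $\omega_{n-m}$ cancel exactly and we obtain $\lambda_{n-m}\le \lambda$, contradicting the assumption $\lambda_{n-m}>\lambda$. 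Hence at least one ratio exceeds $c_2(m,n)\lambda^{2/(n-m-1)}$, as claimed.

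There is no real obstacle here: the lemma is essentially a direct consequence of Minkowski's second theorem combined with the telescoping observation that if all consecutive ratios of the successive minima are small, then $\lambda_{n-m}$ cannot be much larger than $\lambda_1$. The only delicate part is bookkeeping the exponents so that the stated constant $c_2(m,n)$ comes out precisely, which is why the constants $2^{-2/(n-m-1)}$ and $\omega_{n-m}^{2/((n-m)(n-m-1))}$ are chosen as they are.
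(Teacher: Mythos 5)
Your proof is correct and follows essentially the same route as the paper: assume all consecutive ratios are at most $c_2(m,n)\lambda^{2/(n-m-1)}$, telescope to bound the lower minima from below by $\lambda_{n-m}$, apply the upper bound in Minkowski's second theorem to the unit ball in $\spn_\R(\Gamma)$ with $\det(\Gamma)=1$, and check that the constants cancel to give the contradiction $\lambda_{n-m}\le\lambda$. The exponent bookkeeping is exactly as in the paper's argument.
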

\begin{proof}
Suppose the opposite, i.e.,
\begin{equation*}
\frac{\lambda_{i+1}}{\lambda_i}\le c_2(m,n) \lambda^{2/(n-m-1)},
\end{equation*}
for all $1\le i\le n-m-1$. Then, $\lambda_{n-m}\leq (c_2(n,m)\lambda^{2/(n-m-1)})^{n-m-i}\lambda_{i}$, and
by Minkowski's second fundamental theorem \eqref{eq:second_minkowski}
\be
\lambda_1 \lambda_2 \cdots \lambda_{n-m} \le \frac{2^{n-m}}{\omega_{n-m}}\,.
\label{2nd_MT}
\ee
Thus we obtain the contradiction
\bea
\lambda_{n-m}\le {(c_2(m,n) \lambda^{2/(n-m-1)})}^{\frac{(n-m-1)}{2}}  \frac{2}{\omega_{n-m}^{1/(n-m)}}=\lambda\,.
\eea
\end{proof}

Let now $\tilde F_T$ be the CDF of the random variable $Y_T$.
\begin{lemma}
For any $T\ge 1$ and $t\ge 0$ we have
\bea
{\tilde F_T}(t)\le F_T(t).
\eea
\label{cdfs}
\end{lemma}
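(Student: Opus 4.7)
The plan is to prove the equivalent statement that the events are nested, namely
$\{L\in G(m,n,T) : X_T(L)>t\}\subseteq \{L\in G(m,n,T) : Y_T(L)>t\}$,
from which $1-F_T(t)\le 1-\tilde{F}_T(t)$ follows at once. For the trivial case $t< c_1^{-(n-m-1)/2}$ the bound \eqref{l_bound} already forces $\tilde{F}_T(t)=0$, so we may assume $t>0$.

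First I would fix $L\in G(m,n,T)$ with $X_T(L)>t$, set $\Gamma=(\det L)^{-1/(n-m)}L^\bot$ and write $\lambda_i=\lambda_i(B^n\cap\spn_\R(\Gamma),\Gamma)$. Plugging the hypothesis $\dfrob(L)>t(\det L)^{1/(n-m)}$ into the inequality \eqref{bound_for_d} immediately yields the strict lower bound
\begin{equation*}
\lambda_{n-m}>\frac{2t}{n-m}.
\end{equation*}

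Next I would apply Lemma \ref{lambda} with $\lambda=2t/(n-m)$, which is legitimate since $\lambda_{n-m}>\lambda>0$. This produces an index $i\in\{1,\dots,n-m-1\}$ with
\begin{equation*}
\frac{\lambda_{i+1}}{\lambda_i}>c_2(m,n)\left(\frac{2t}{n-m}\right)^{\!2/(n-m-1)}.
\end{equation*}
A short calculation, using the definitions of $c_1$ and $c_2(m,n)$, shows that the right-hand side equals $c_1\,t^{2/(n-m-1)}$. By the definition of the similarity classes $\D(\ve\delta_i(u))$ and of $\D(u)$, this means $L\in\D(c_1 t^{2/(n-m-1)})$.

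The final step is to upgrade this containment to the strict inequality $Y_T(L)>t$. Because the successive-minima ratio inequality above is strict, there is some $\varepsilon>0$ such that also $\lambda_{i+1}/\lambda_i\ge c_1 (t+\varepsilon)^{2/(n-m-1)}$; hence $L\in\D(c_1(t+\varepsilon)^{2/(n-m-1)})$, and the definition of $Y_T$ as a supremum gives $Y_T(L)\ge t+\varepsilon>t$, as required. The main (and essentially only) subtlety I expect is this last bookkeeping with strict versus non-strict inequalities when passing from the $\lambda_i$-ratio to the supremum that defines $Y_T$; the rest is a direct chain of substitutions driven by \eqref{bound_for_d} and Lemma \ref{lambda}.
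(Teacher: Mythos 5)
Your proof is correct and follows essentially the same route as the paper: use \eqref{bound_for_d} to turn $X_T(L)>t$ into $\lambda_{n-m}>2t/(n-m)$, invoke Lemma \ref{lambda} with $\lambda=2t/(n-m)$, identify the resulting constant as $c_1$, and conclude membership in $\D(c_1t^{2/(n-m-1)})$, hence $Y_T(L)>t$ and the inclusion of tail events. Your explicit $\varepsilon$-argument for the strict inequality $Y_T(L)>t$ and the separate treatment of small $t$ via \eqref{l_bound} are minor refinements of details the paper leaves implicit.
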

\begin{proof} Let $\lambda_i:=\lambda_{i}(B^n\cap
\spn_{\R}(\Gamma),\Gamma)$, $1\le i\le n-m$. By (\ref{bound_for_d}), we have
\bea
\frac{\dfrob(L)}{(\det(L))^{\frac{1}{n-m}}}\le \frac{(n-m)}{2} \lambda_{n-m}\,.
\eea
Hence, if for some $t$ holds
\bea
X_T(L)=\frac{\dfrob(L)}{(\det(L))^{\frac{1}{n-m}}}> t
\eea
then clearly $\lambda_{n-m}>\frac{2t}{(n-m)}$.
By Lemma \ref{lambda}, applied with $\lambda=\frac{2t}{(n-m)}$, we get
\bea
\frac{\lambda_{i+1}}{\lambda_i}>c_1(m,n) t^{2/(n-m-1)}\,.
\eea
Consequently, the lattice $\Gamma$ belongs to a similarity class in $\D(c_1t^{2/(n-m-1)})$, so that $Y_T(L)> t$.
Therefore,
\bea
\begin{split}
\prob_{m,n,T}(X_T\le t\,) & =1 - \frac{\#\{L\in G(m,n,T): \dfrob(L)/(\det(L))^{\frac{1}{n-m}}>t\}}{\#G(m,n,T)} \\
&\ge 1- \frac{\#\{L\in G(m,n,T): Y_T(L)> t\}}{\#G(m,n,T)}\\&=\prob_{m,n,T}(Y_T\le t\,).
\end{split}
\eea
\end{proof}
%
%

The proofs of Theorem \ref{distr} and \ref{Asymptotic_bound_1} are now an easy consequence of Lemma
\ref{cdfs} and Schmidt's results on the distribution of sublattices.
\begin{proof}[Proof of Theorem \ref{distr}]

By Lemma \ref{cdfs} and  Theorem \ref{Schmidt_th_2} we have:
\bea
\begin{split}
\prob_{m,n,T}(\dfrob(L)/(\det(L))^{\frac{1}{n-m}}>t) & = 1- F_T(t) \le 1-{\tilde F}_T(t) \\
& =\frac{\#\{L\in G(m,n,T): Y_T(L)> t\}}{\#G(m,n,T)}
\\ &\ll_{m,n} \mu(\D(c_1t^\frac{2}{n-m-1}))\ll_{m,n} t^{-2}.
\end{split}
\eea
\end{proof}
%



\begin{proof}[Proof of Theorem \ref{Asymptotic_bound_1}]
Let also $E(\cdot)$ denote the mathematical expectation.
Since for any nonnegative real-valued random variable $X$
\be
E(X)=\int_0^\infty (1-F_X(t))dt\,,
\label{E_F}
\ee
Lemma \ref{cdfs} implies that $E(X_T)\le E(Y_T)$
and, consequently, 
\be
\sup_{T}E(X_T)\le \sup_{T}E(Y_T)\,.
\label{E_limsup}
\ee
Next,  by Theorem \ref{Schmidt_th_2} we also have
\bea
\begin{split}
1-{\tilde F}_T(t)&=\frac{\#\{L\in G(m,n,T): Y_T(L)> t\}}{\#G(m,n,T)}
\\&\ll_{m,n} \mu(\D(c_1t^\frac{2}{n-m-1}))\ll_{m,n} t^{-2}.
\end{split}
\eea
Thus by (\ref{E_F}), (\ref{E_limsup}) and observation (\ref{l_bound}), we obtain
\bea
\sup_{T}E(X_T)\ll_{m,n} \int_{c_1^{-(n-m-1)/2}}^\infty t^{-2}\, dt \ll_{m,n} 1,
\eea
which  proves the theorem.
\end{proof}

\section{Appendix: on upper bounds for the Frobenius number}
\label{A1}

From the viewpoint of analysis of integer programming algorithms, upper bounds for the Frobenius number $\frob({\ve a})$ in terms of the input vector ${\ve a}$ are of primary interest. All known upper bounds are of order $||{\ve a}||^2$ and, as it was shown in Erd\"os and Graham \cite{EG}, the quantity $||{\ve a}||^2$ plays a role of a limit for estimating the Frobenius number $\frob({\ve a})$  from above. For $n=3$ Beck and Zacks \cite{BZ} conjectured that, except of a special family of input vectors,
the Frobenius number does not exceed $c(a_1 a_2 a_3)^{\alpha}$ with absolute constants $c$ and $\alpha<2/3$. This conjecture has been disproved by
 Schlage-Puchta \cite{SP}. As a special case, the latter result implies that, roughly speaking, cutting off special families of input vectors cannot make the order of upper bounds for $g_3$ smaller than $||{\ve a}||^2$.

In this appendix
we consider the general case $n\ge 3$ and show that the order $||{\ve
  a}||^2$ cannot be improved along any given ``direction''
${\ve\alpha}\in \R^n$. Although the proof of this result follows the
general line of the proof of Theorem \ref{optimality}, in this special
setting it can be significantly simplified.

For ${\ve a}\in \Z_{>0}^n$ and $t\in \Z$, let
\bea
V_{\ve a}(t)=\{{\ve x}\in \R^n: {\ve a}^T {\ve x} = t\}\,
\eea
 and $\Lambda_{\ve a}(t)=V_{\ve a}(t) \cap \Z^n$.  Here and throughout the rest of the paper we consider $V_{\ve a}(t)$ as a usual $(n-1)$--dimensional Euclidean space. Denote by $S_{\ve a}(t)$ the $(n-1)$--dimensional simplex $V_{\ve a}(t)\cap \R_{\ge 0}^n$.
For convenience we will also use the notation $V_{\ve a}=V_{\ve a}(0)$ and $ \Lambda_{\ve a}= \Lambda_{\ve a}(0)$.
%
With respect to that notation Kannan \cite{Kannan} showed that
\be \frob({\ve a})-||{\ve a}||_1=\mu(S_{\ve a}(1), \Lambda_{\ve a}(1))\,.
\label{projection_of_Kannan}\ee
%
%
%

%
Fix a point ${\ve
\alpha}=(\alpha_1,\alpha_2,\ldots,\alpha_{n-1},1)$, $n\ge 3$, with $0\le\alpha_1\le\alpha_2\le\ldots\le\alpha_{n-1}\le 1$.
%
%
\begin{theo}
There exists
a  sequence of  integer vectors ${\ve a}(t)$ and a constant $c_3=c_3({\ve \alpha})$,  such that %
\be \frob({\ve a}(t))> c_3 ||{\ve a}(t)||^2+||{\ve a}(t)||_1\,\;\;\;t=1,2,\ldots \label{Sharpness2}\ee
and for any $\epsilon>0$ we have
\be \left \| {\ve \alpha}-\frac{{\ve a}(t)}{||{\ve a}(t)||_\infty}\right \|<\epsilon
\,\label{Density2}\ee
for all sufficiently large $t$.

\label{only_asymptotic}
\end{theo}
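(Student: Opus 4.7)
The plan is to specialize the construction of Theorem~\ref{optimality} to the case $m=1$ and to use Kannan's identity \eqref{projection_of_Kannan} to translate the required lower bound on $\frob(\ve a(t))$ into a lower bound on an inhomogeneous minimum.

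First I would rewrite \eqref{Sharpness2} via \eqref{projection_of_Kannan} as the requirement that
\[
\mu\bigl(S_{\ve a(t)}(1),\,\Lambda_{\ve a(t)}(1)\bigr) > c_3\,\|\ve a(t)\|^2.
\]
Since $\alpha_n=1$ is the largest entry of $\ve\alpha$ and the direction of $\ve a(t)$ approaches $\ve\alpha$, for $t$ large every coordinate $a_i(t)$ has the same order as $\|\ve a(t)\|_\infty$ up to a constant depending only on $\ve\alpha$. The simplex $S_{\ve a(t)}(1)$, whose vertices are $\ve e_i/a_i(t)$, therefore lies in a Euclidean ball in $V_{\ve a(t)}$ of radius $O_{\ve\alpha}(1/\|\ve a(t)\|)$ around its barycentre. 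By the translation invariance of the inhomogeneous minimum and the identity $\mu(\sigma S,L)=\sigma^{-1}\mu(S,L)$, combined with Jarnik's lower inequality in \eqref{eq:jarnik_upper}, one deduces
\[
\mu\bigl(S_{\ve a(t)}(1),\Lambda_{\ve a(t)}(1)\bigr)\;\ge\;c(\ve\alpha)\,\|\ve a(t)\|\,\lambda_{n-1}\bigl(B^n\cap V_{\ve a(t)},\,\Lambda_{\ve a(t)}\bigr)
\]
for a positive constant $c(\ve\alpha)$. Because $\det\Lambda_{\ve a(t)}=\|\ve a(t)\|$, Minkowski's second theorem \eqref{eq:second_minkowski} then reduces the problem to bounding the product $\lambda_1\cdots\lambda_{n-2}$ by a constant depending only on $\ve\alpha$; equivalently, $V_{\ve a(t)}$ has to contain $n-2$ linearly independent integer vectors of uniformly bounded norm.

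I would enforce this constraint in the spirit of the construction used for Theorem~\ref{optimality}: pick $n-2$ linearly independent $\ve b_1,\ldots,\ve b_{n-2}\in\Z^n$ nearly orthogonal to $\ve\alpha$ and set $P=\spn(\ve b_1,\ldots,\ve b_{n-2})^\perp$, a 2-dimensional rational subspace of $\R^n$ containing a direction close to $\ve\alpha$. For every primitive $\ve a\in P\cap\Z^n$ one has $\ve b_1,\ldots,\ve b_{n-2}\in V_{\ve a}=\ve a^\perp$, so $\lambda_i(B^n\cap V_{\ve a},\Lambda_{\ve a})\le\|\ve b_i\|$ for $1\le i\le n-2$. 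Inside the rank-$2$ sublattice $P\cap\Z^n$ one may select primitive vectors of arbitrarily large norm whose direction approximates, to any prescribed accuracy, the orthogonal projection of $\ve\alpha$ onto $P$.

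To secure the convergence \eqref{Density2}, I would let the auxiliary data depend on $t$: choose a sequence of 2-dimensional rational subspaces $P^{(t)}=\spn(\ve b_1^{(t)},\ldots,\ve b_{n-2}^{(t)})^\perp$ approaching the line $\R\ve\alpha$ in the Grassmannian, and for each $t$ take $\ve a(t)\in P^{(t)}\cap\Z^n$ primitive, of norm at least $t$, with direction approximating the projection of $\ve\alpha$ onto $P^{(t)}$ to within $1/t$. A diagonal argument then yields $\ve a(t)/\|\ve a(t)\|_\infty\to\ve\alpha$.

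The main technical obstacle is the Diophantine step which has to reconcile the two requirements: the subspaces $P^{(t)}$ should tend to $\R\ve\alpha$ in the Grassmannian while the generators $\ve b_1^{(t)},\ldots,\ve b_{n-2}^{(t)}$ of $(P^{(t)})^\perp\cap\Z^n$ have the product of their norms bounded uniformly in $t$, so that $c_3$ emerges as a positive constant depending only on $\ve\alpha$. This is the analogue in our setting of the subspace-approximation step in Theorem~\ref{optimality}; the promised simplification for $m=1$ comes from the fact that Kannan's identity replaces the ad hoc geometric reduction used there, so that the auxiliary polytopes $Q_t$, the diameter bound \eqref{bounds_for_diameter}, and the construction of the point $\ve z_t$ are no longer needed.
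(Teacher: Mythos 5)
Your reduction is exactly the paper's: Kannan's identity \eqref{projection_of_Kannan}, the containment of (a translate of) $S_{\ve a(t)}(1)$ in a ball of radius $O_{\ve\alpha}(1/\|{\ve a}(t)\|)$, the scaling property of $\mu$, Jarnik's lower bound $\mu\ge\tfrac12\lambda_{n-1}$, and Minkowski's second theorem with $\det\Lambda_{{\ve a}(t)}=\|{\ve a}(t)\|$, so that everything hinges on keeping $\lambda_1\cdots\lambda_{n-2}$ of $\Lambda_{{\ve a}(t)}$ bounded by a constant depending only on $\ve\alpha$. But the step you yourself flag as ``the main technical obstacle'' is the actual content of the construction, and as you have set it up it cannot be carried out for a general $\ve\alpha$: if the generators ${\ve b}_1^{(t)},\dots,{\ve b}_{n-2}^{(t)}$ are to have uniformly bounded norms, there are only finitely many admissible integer tuples, hence only finitely many planes $P^{(t)}$; so eventually you are working in one fixed rational $2$-plane $P$, and directions inside a fixed $P$ can approach $\ve\alpha$ only if $\ve\alpha$ itself lies in $P$. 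For $\ve\alpha$ not contained in any $2$-dimensional rational subspace your scheme of ``$P^{(t)}$ tending to $\R\ve\alpha$ in the Grassmannian with uniformly bounded orthogonal generators'' is vacuous, and if you let the generators grow the constant $c_3$ degenerates; the diagonal argument you invoke does not rescue a single positive $c_3(\ve\alpha)$.

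The paper resolves precisely this point by an initial reduction you are missing: since \eqref{Density2} is an $\epsilon$-statement, one may assume ${\ve\alpha}\in\Q^n$ with strict inequalities $0<\alpha_1<\dots<\alpha_{n-1}<1$, set ${\ve a}=q{\ve\alpha}$ primitive, take ${\ve a}_1,\dots,{\ve a}_{n-2}$ to be successive-minima vectors of the \emph{fixed} lattice $\Lambda_{\ve a}$, and let $P_{\ve a}$ be the $2$-plane orthogonal to them; since these vectors lie in ${\ve a}^\perp$, the plane $P_{\ve a}$ contains ${\ve a}$, so the rank-$2$ lattice $P_{\ve a}\cap\Z^n$ supplies primitive vectors ${\ve a}(t)$ of arbitrarily large norm with direction tending to $\ve\alpha$, while $\lambda_1,\dots,\lambda_{n-2}$ stay frozen (the argument that for large $t$ the first $n-2$ minima of $\Lambda_{{\ve a}(t)}$ are attained on the ${\ve a}_i$ is as in \eqref{lambda_equals}). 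The same rationality/strictness reduction also repairs a secondary flaw in your write-up: with the theorem's hypothesis $0\le\alpha_1$ allowed, your claim that every coordinate $a_i(t)$ is comparable to $\|{\ve a}(t)\|_\infty$ (needed both for the small-ball containment and for ${\ve a}(t)\in\Z^n_{>0}$, i.e.\ for $\frob({\ve a}(t))$ to be defined) is false when some $\alpha_i=0$. So the proposal follows the paper's route in its analytic part but leaves the decisive Diophantine construction unproved, and the way it is formulated would not close without the paper's reduction to a rational direction and a single fixed plane.
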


\begin{proof}
Without loss of generality, we may assume that ${\ve \alpha}\in \Q^{n}$ and
\be 0<\alpha_1<\alpha_2<\ldots <\alpha_{n-1}<
1\,.\label{conditions_on_alpha} \ee
Let us choose an integer number $q$ such that ${\ve a}:=q{\ve \alpha}$ is a primitive integer vector in $\Z^n_{> 0}$.
%
%
%
%
Put $\lambda_i=\lambda_i(B^n\cap\spn_{\R}\Lambda_{\ve a}, \Lambda_{\ve a})$, $1\le i\le n-1$, and choose $n-1$ linearly independent integer vectors
${\ve a}_i$ corresponding to $\lambda_i$. Then we clearly have $||{\ve a}_i||=\lambda_i$, $1\le i\le n-1$.

Next let $P_{\ve a}$ be the two--dimensional plane orthogonal to the vectors ${\ve a}_1, \ldots, {\ve a}_{n-2}$.
%
The plane $P_{\ve a}$ can be considered as a usual Euclidean two-dimensional plane. Thus one can choose a sequence ${\ve a}(t)$ of primitive vectors of the lattice $P_{\ve a}\cap \Z^n$ with the following properties:
\begin{itemize}
\item[(A1)] ${\ve a}(t)\neq {\ve a}$ for $t=1,2,\ldots$;
\item[(A2)] For any $\epsilon>0$ the inequality (\ref{Density2}) holds
 for all sufficiently large $t$.
\end{itemize}
Let $\lambda_i(t)=\lambda_i(B^n\cap\spn_{\R}\Lambda_{{\ve a}(t)}, \Lambda_{{\ve a}(t)})$, $1\le i\le n-1$, and let ${\ve a}_i(t)$, $1\le i\le n-1$, be  linearly independent integer vectors corresponding to the successive minima $\lambda_i(t)$. Similarly to (\ref{lambda_equals}) we have
\be
\lambda_i(t)=\lambda_{i}\,,\;\;\;1\le i\le n-2
\label{lambda_n-2}
\ee
for all sufficiently large $t$.

By \eqref{lambda_n-2} and the Minkowski second fundamental theorem \eqref{eq:second_minkowski}
\bea
\frac{2^{n-1}||{\ve a}(t)||}{(n-1)!\omega_{n-1}}\le \lambda_1(t)\lambda_2(t)\cdots\lambda_{n-1}(t)=\lambda_{n-1}(t)\prod_{i=1}^{n-2} \lambda_{i}\,,
\eea
so that
\be
\lambda_{n-1}(t)\ge\frac{2^{n-1}||{\ve a}(t)||}{(n-1)!\omega_{n-1}\prod_{i=1}^{n-2} \lambda_{i}}\, 
\label{lambda_in_appendix}
\ee
for all sufficiently large $t$.

The vectors ${\ve a}(t)$ are primitive and by (\ref{conditions_on_alpha}) for all sufficiently large $t$ we have ${\ve a}(t)\in \Z^n_{>0}$. Thus the Frobenius numbers $\frob({\ve a}(t))$ are well--defined when $t$ is large enough.  Observe also that by (\ref{projection_of_Kannan})
\bea \frob({\ve a}(t))-||{\ve a}(t)||_1=\mu(S_{{\ve a}(t)}(1), \Lambda_{{\ve a}(t)}(1))\,.
\eea

In view of (\ref{Density2}) and (\ref{conditions_on_alpha})  one can choose some constant $r=r({\ve \alpha})$ such that for all sufficiently large $t$
a translate of  $S_{{\ve a}(t)}(1)$ lies in $\frac{r}{||{\ve a}(t)|} B^{n}_1$. Therefore
\bea
\begin{split}
 \frob({\ve a}(t))-||{\ve a}(t)||_1 &> \mu
\left(\frac{r}{||{\ve a}(t)||} B^{n}(t)\cap V_{{\ve a}(t)},
\Lambda_{{\ve a}(t)}\right)\\
&=\frac{||{\ve a}||}{r}\mu(B^{n}\cap V_{{\ve
  a}(t)}, \Lambda_{{\ve a}(t)})\,.
\end{split}
\eea
By the lower bound in Jarnik's inequalities \eqref{eq:jarnik_upper}, we have 
\bea
\mu(B^{n}_1\cap V_{{\ve a}(t)}, \Lambda_{{\ve a}(t)})\ge \frac{\lambda_{n-1}(t)}{2}
\eea
%
and with \eqref{lambda_in_appendix} we finally get
\bea
\frob({\ve a}(t))-||{\ve a}(t)||_1> \frac{2^{n-2}}{(n-1)!\omega_{n-1}r({\ve \alpha})\prod_{i=1}^{n-2} \lambda_{i}}||{\ve a}(t)||^2\,
\eea
for all sufficiently large $t$.

\end{proof}



\end{document}